\documentclass[reqno]{amsart}
\usepackage{amsthm,amsmath,amssymb}
\usepackage[pdftex]{graphicx}
\usepackage{braket}
\usepackage{color}

\title{On eigenvalues of double branched covers}
\author{Kouki Sato}
\date{}

\newtheorem{thm}{Theorem}
\newtheorem{prop}{Proposition}
\newtheorem{lem}[thm]{Lemma}
\newtheorem{cor}{Corollary}

\theoremstyle{definition}

\newtheorem*{remark}{Remark}
\newtheorem*{acknowledge}{Acknowledgements}

\DeclareMathOperator{\punc}{punc}

\DeclareMathOperator{\alt}{alt}
\DeclareMathOperator{\dalt}{dalt}

\begin{document}
\maketitle

\begin{abstract}
For a given knot, we study the minimal number of positive eigenvalues of the double branched cover over spanning surfaces for the knot. 
The value gives a lower bound for various genera, the dealternating number and the alternation number of knots, and we prove that Batson's bound for the non-orientable 4-genus gives an estimate of the value.
In addition, we use the value to give a necessary condition for being quasi-alternating.
\end{abstract}

\section{Introduction}
Throughout this paper, all manifolds are assumed to be smooth and compact unless otherwise stated.

For a surface $F$ properly embedded in $B^4$, let $M_F$ denote 
the double branched cover of $B^4$ over $F$,
and $b_2^+(M_F)$ ($b_2^-(M_F)$) the number of positive (resp.\ negative)
eigenvalues of the intersection form of $M_F$.
Then we can define the knot invariants
$$
b^{\pm}(K) :=
\min \left\{b^{\pm}_2(M_F) \mid F \text{ is a surface in $S^3$ with $\partial F = K$} \right\}
$$
and the knot concordance invariants
$$
b^{\pm}_*(K) :=
\min \left\{b^{\pm}_2(M_F) \mid F \text{ is a surface in $B^4$ with $\partial F = K$} \right\}.
$$
Here, we obtain $M_F$ for a surface $F$ in $S^3$ by pushing the interior
of $F$ into the interior of $B^4$ and taking the double branched cover of the resulting surface.
Recently, Greene \cite{greene} gives the following characterization of alternating knots.
\begin{thm}[Greene, \text{\cite[Theorem 1.1]{greene}}] 
\label{thm greene}
A knot $K$ is alternating if and only if $b^+(K)=b^-(K)=0$.
\end{thm}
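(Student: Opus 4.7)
The plan is to prove the two implications separately. For the forward direction, suppose $K$ admits an alternating diagram $D$, and take the two checkerboard surfaces $F_W$ and $F_B$ of $D$ as candidate spanning surfaces. After pushing the interior of each into $B^4$, the intersection form of $M_{F_W}$ (respectively $M_{F_B}$) is represented by the corresponding signed Goeritz matrix of $D$, e.g.\ via the Gordon--Litherland formula. The key classical computation is that for an alternating diagram, one of these Goeritz matrices is positive definite and the other negative definite, so $b^-_2(M_{F_W}) = 0$ and $b^+_2(M_{F_B}) = 0$ (or the opposite, depending on conventions), which immediately forces $b^+(K) = b^-(K) = 0$.

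For the converse, assume $b^+(K) = b^-(K) = 0$. Then there are spanning surfaces $F_1, F_2$ in $S^3$ with $\partial F_j = K$ such that $M_{F_1}$ has no positive eigenvalues and $M_{F_2}$ has no negative eigenvalues. The double branched cover $\Sigma_2(K) = \partial M_{F_j}$ thus admits both a negative semidefinite and a positive semidefinite $4$-manifold filling, each arising from a spanning surface of $K$; the intersection forms of $M_{F_1}$ and $M_{F_2}$ realise, up to sign, the Goeritz forms of $F_1$ and $F_2$.

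The main obstacle is to extract an alternating diagram of $K$ from this data. My plan is to combine the two pieces into a closed positive definite $4$-manifold, by gluing $-M_{F_1}$ to $M_{F_2}$ along $\Sigma_2(K)$ (which is a rational homology sphere, so the glued intersection form is the orthogonal sum of the two pieces rationally), and then apply Donaldson's diagonalisation theorem to obtain a lattice embedding of the resulting form into a standard diagonal lattice. The heart of the argument is to exploit the combinatorial structure of this embedding, via changemaker-type lattice techniques, to reconstruct from $F_1$ and $F_2$ a pair of planar checkerboard graphs whose medial link is alternating and isotopic to $K$. The forward direction amounts to routine sign bookkeeping about Goeritz matrices; the hard part, and the essential content of Greene's theorem, is this lattice-theoretic reconstruction of an alternating diagram from the existence of matching definite fillings.
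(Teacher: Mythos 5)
This statement is quoted in the paper as Greene's theorem and is not proved there; the paper simply cites \cite{greene}. So the only question is whether your proposal constitutes a proof on its own, and it does not. Your forward direction is fine and standard: for an alternating diagram the two Goeritz forms are definite of opposite signs (this is the same fact the paper itself uses in the proof of Proposition \ref{prop4}), so both checkerboard surfaces witness $b^+(K)=b^-(K)=0$. One small point to be careful about even here: you should say why a knot admitting \emph{some} definite pair of spanning surfaces forces the minimum over \emph{all} spanning surfaces to vanish --- that part is immediate from the definition of $b^{\pm}$ as a minimum, but your phrasing ``which immediately forces'' glosses over the fact that only the existence of one surface of each sign is needed.

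The converse is where the entire content of the theorem lives, and your proposal does not prove it; it only announces a strategy. Gluing $M_{F_2}$ to $-M_{F_1}$ along $\Sigma(K)$ and invoking Donaldson's diagonalisation theorem is indeed the correct opening move, and it is how Greene begins. But the step you describe as ``the heart of the argument'' --- extracting from the lattice embedding a pair of planar checkerboard graphs whose medial link is an alternating diagram of $K$ --- is exactly the hard theorem, and you give no mechanism for it. In particular: (i) ``changemaker-type lattice techniques'' is the wrong tool here; those arise in Greene's work on lens space surgeries, whereas the alternating-links paper rests on a structural analysis of the Gordon--Litherland pairing of a definite spanning surface and a characterisation of the resulting lattices as cut/flow lattices of planar graphs; (ii) even granting a planar graph, you must show the resulting alternating link is isotopic to $K$ and not merely has the same double branched cover or Goeritz form, which requires additional geometric input about how the surfaces $F_1$ and $F_2$ sit relative to one another in $S^3$. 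As written, the proposal establishes the easy direction and defers the essential direction to an unexecuted plan, so it has a genuine gap.
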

This theorem implies that alternating knots can be thought of as the trivial knot 
in terms of $b^{\pm}(K)$,
while knots concordant to an alternating knot are like slice knots in terms of $b^{\pm}_*(K)$.
In this paper, we study the invariants $b^{\pm}$ and $b^{\pm}_*$ and their relationship to various genera, 
the dealternating number \cite{adams}, the alternation number \cite{kawauchi} and 
quasi-alternating knots \cite{oz-sz}.

Here we mention two results of this work.
The first result is an observation of Batson's bound of 
the non-orientable 4-genus \cite{batson}.
Here {\it the non-orientable 4-genus}
$\gamma_4(K)$ of a knot $K$ is the minimal first Betti number of non-orientable surfaces in $B^4$ with boundary $K$. Using the Heegaard Floer correction term of the $(-1)$-surgery along $K$ ( denoted $d(S^3_{-1}(K))$) and the knot signature $\sigma(K)$, Batson gives the inequality
$$
\gamma_4(K) \geq  \frac{\sigma(K)}{2} - d(S^3_{-1}(K))
$$
and prove that $\gamma_4$ can be arbitrarily large.
Batson's bound is strong enough to prove
$\gamma_4(T_{2n, 2n-1}) = n-1$ for any integer $n>1$ 
where $T_{2n, 2n-1}$ denotes the $(2n, 2n-1)$-torus knot, while the bound becomes a trivial inequality for any alternating knot. We found the reason of this gap; Batson's bound is essentially a lower bound for $b^+_*(K)$.
\begin{thm}
\label{thm2}
For any knot $K$, we have 
$$
\gamma_4(K) \geq b^+_*(K) \geq \frac{\sigma(K)}{2} -d(S^3_{-1}(K)).
$$
\end{thm}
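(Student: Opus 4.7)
I prove the two inequalities separately.

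For $\gamma_4(K) \geq b^+_*(K)$, I choose a non-orientable surface $F \subset B^4$ with $\partial F = K$ realizing $\gamma_4(K)$, so $b_1(F) = \gamma_4(K)$. The Euler characteristic formula for a double branched cover gives $\chi(M_F) = 2\chi(B^4) - \chi(F) = 1 + b_1(F)$; a standard computation (e.g.\ via Mayer--Vietoris along the branching locus, or via the Gordon--Litherland description of the intersection form of $M_F$) yields $b_1(M_F) = 0$, and hence $b_2(M_F) = b_1(F)$. Therefore $b^+_*(K) \leq b^+_2(M_F) \leq b_2(M_F) = b_1(F) = \gamma_4(K)$.

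For $b^+_*(K) \geq \sigma(K)/2 - d(S^3_{-1}(K))$, I adapt Batson's construction and observe that it estimates $b^+_2(M_F)$ rather than the larger quantity $b_1(F)$. Given a surface $F$ in $B^4$ realizing $b^+_*(K)$, I attach a $(-1)$-framed 2-handle $h$ to $B^4$ along $K$, producing $X = B^4 \cup h$ with $\partial X = S^3_{-1}(K)$. Capping $F$ with the core disk $D$ of $h$ yields a closed surface $\hat F = F \cup D \subset X$. I then take the double cover $\tilde X$ of $X$ branched along $\hat F$. Since $H_1(S^3_{-1}(K); \mathbb{Z}) = 0$, the induced cover of $\partial X$ is the disconnected one, so $\partial \tilde X = S^3_{-1}(K) \sqcup S^3_{-1}(K)$.

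I then compute $\sigma(\tilde X)$ and $b^+_2(\tilde X)$: a Mayer--Vietoris decomposition $\tilde X = M_F \cup \tilde h$, where $\tilde h$ is the double cover of $h$ branched along $D$, gives $b^+_2(\tilde X) = 2\, b^+_2(M_F) = 2\, b^+_*(K)$, and a standard signature-formula computation (via Viro or Gordon--Litherland) expresses $\sigma(\tilde X)$ in terms of $\sigma(K)$ and the framing. Applying the Ozsv\'ath--Szab\'o $d$-invariant inequality to $\tilde X$ with a suitable spin$^c$ structure, and using the additivity $d(\partial \tilde X) = 2\, d(S^3_{-1}(K))$, I rearrange to extract the claimed bound; the factor of $1/2$ in $\sigma(K)/2$ arises from the cancellation of the factor $2$ in $b^+_2(\tilde X)$ against the factor $2$ in $d(\partial \tilde X)$.

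The main technical obstacle is invoking an appropriate form of the Ozsv\'ath--Szab\'o inequality for the 4-manifold $\tilde X$, which is not a priori negative semi-definite and has disconnected boundary, and verifying that the signature, Euler-characteristic, and spin$^c$ bookkeeping line up to produce precisely the coefficient $1/2$ on $\sigma(K)$ and coefficient $-1$ on $d(S^3_{-1}(K))$.
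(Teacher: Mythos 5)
Your first inequality is fine and is essentially the paper's Proposition \ref{prop3}: for any $F\subset B^4$ with $\partial F=K$ one has $b_1(F)=b_2(M_F)=b_2^+(M_F)+b_2^-(M_F)$, hence $\gamma_4(K)\geq b^+_*(K)$ (indeed $\geq b^+_*(K)+b^-_*(K)$).

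The second inequality is where the genuine gap lies. The paper does not re-run Batson's construction; it quotes Batson's Theorem 1.5 as a black box --- for a \emph{non-orientable} $F\subset B^4$ with $\partial F=K$, $b_1(F)\geq e(F)/2-2d(S^3_{-1}(K))$ --- and combines it with the Gordon--Litherland formula $\sigma(M_F)=\sigma(K)-e(F)/2$ via the identity $b_1(F)+\sigma(M_F)=b_2(M_F)+\sigma(M_F)=2b_2^+(M_F)$; the only additional work is a M\"obius-band stabilization (boundary-summing a band with Euler number $+2$) to cover the case $b_1(F)$ even, where $F$ may be orientable and Batson's theorem does not apply --- a case your sketch also does not isolate. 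Your route instead tries to rebuild Batson's argument from scratch, and it breaks at the first step: the closed surface $\hat F=F\cup D$ in $X=B^4\cup h$ represents the generator of $H_2(X;\mathbb{Z})\cong\mathbb{Z}$, so its class in $H_2(X;\mathbb{Z}/2)$ is nonzero, and therefore the double cover of $X$ branched along $\hat F$ does not exist (there is no homomorphism from $H_1(X\setminus\hat F)$ to $\mathbb{Z}/2$ sending the meridian to $1$). Consequently the subsequent claims have nothing to attach to: the identification $\partial\tilde X=S^3_{-1}(K)\sqcup S^3_{-1}(K)$, and especially $b_2^+(\tilde X)=2\,b_2^+(M_F)$, whose factor of $2$ is unjustified even formally since $M_F$ occurs only once in your decomposition. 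Finally, the step you yourself flag as the ``main technical obstacle'' --- an Ozsv\'ath--Szab\'o $d$-invariant inequality for a 4-manifold that is not negative semi-definite and has disconnected boundary --- is precisely the analytic content that would have to be supplied, and it is not. The repair is to observe that Batson's published inequality, applied verbatim to $F$ (or to its M\"obius-band stabilization when $b_1(F)$ is even), already bounds $2b_2^+(M_F)$ once $\sigma(M_F)$ is added to both sides.
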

Theorem \ref{thm2} implies that $b^+_*(K)$ can be arbitrarily large.
\begin{cor}
\label{cor1}
$b^+_*(T_{2n,2n-1})= n-1$ for any $n \in \mathbb{Z}_{>0}$.
In particular, $b^+_*$ can be arbitrarily large.
\end{cor}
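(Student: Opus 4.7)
The plan is to sandwich $b^+_*(T_{2n,2n-1})$ between two quantities that both evaluate to $n-1$ using Theorem \ref{thm2}, relying on the computations already carried out in Batson's paper \cite{batson}.

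First I would recall the two ingredients from \cite{batson} which make Batson's bound sharp on this family. On the one hand, Batson exhibits an explicit non-orientable surface in $B^4$ bounded by $T_{2n,2n-1}$ with first Betti number $n-1$, which yields the upper bound $\gamma_4(T_{2n,2n-1}) \leq n-1$. On the other hand, Batson computes both the signature and the correction term to show
$$
\frac{\sigma(T_{2n,2n-1})}{2} - d(S^3_{-1}(T_{2n,2n-1})) = n-1.
$$

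Next I would feed these two computations into the chain of inequalities provided by Theorem \ref{thm2}:
$$
n-1 \;\geq\; \gamma_4(T_{2n,2n-1}) \;\geq\; b^+_*(T_{2n,2n-1}) \;\geq\; \frac{\sigma(T_{2n,2n-1})}{2} - d(S^3_{-1}(T_{2n,2n-1})) \;=\; n-1.
$$
Equality is forced throughout, giving $b^+_*(T_{2n,2n-1}) = n-1$. The case $n=1$ is the unknot, where $b^+_* = 0 = n-1$ trivially. The second statement, that $b^+_*$ is unbounded, is then immediate by letting $n \to \infty$.

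There is essentially no obstacle: once Theorem \ref{thm2} is in hand, this is a direct corollary obtained by quoting Batson's two endpoint computations, and the only thing to check is that the two bounds coincide on this family — which is precisely the content of Batson's sharpness argument for $T_{2n,2n-1}$.
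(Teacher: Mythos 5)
Your proof is correct and follows exactly the route the paper intends: the corollary is stated immediately after Theorem \ref{thm2} precisely because Batson's sharpness computation for $T_{2n,2n-1}$ (the explicit non-orientable surface giving $\gamma_4 \leq n-1$ and the evaluation $\sigma/2 - d(S^3_{-1}) = n-1$) squeezes $b^+_*$ between two copies of $n-1$. Your handling of the degenerate case $n=1$ (the unknot) is a small but welcome addition, since Batson's equality is stated for $n>1$.
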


The second result is the following necessary condition for being quasi-alternating.
\begin{thm}
\label{thm3}
If a knot $K$ is quasi-alternating, then $b^+_*(K) = b^-_*(K)=0$.
\end{thm}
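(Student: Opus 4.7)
The plan is to induct on the recursive definition of quasi-alternating, after first extending $b^{\pm}_*$ from knots to links in the obvious way; the strengthened claim to be proved by induction is that every quasi-alternating link $L$ satisfies $b^{+}_*(L) = b^{-}_*(L) = 0$. The base case of the unknot is immediate: the trivial disk $D \subset B^4$ has $M_D \cong B^4$, and so $b^{\pm}_2(M_D) = 0$.

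For the inductive step, let $L$ be quasi-alternating with a distinguished crossing $c$ whose two resolutions $L_0$ and $L_{\infty}$ are both quasi-alternating. Using the inductive hypothesis, choose surfaces $F_0^+ \subset B^4$ bounded by $L_0$ with $b^+_2(M_{F_0^+}) = 0$, and $F_{\infty}^- \subset B^4$ bounded by $L_{\infty}$ with $b^-_2(M_{F_{\infty}^-}) = 0$. The saddle cobordisms at $c$ furnish bands $\beta, \beta'$ realising $L_0 \leadsto L$ and $L_{\infty} \leadsto L$, and I form two candidate surfaces
\[
F^+ := F_0^+ \cup \beta, \qquad F^- := F_{\infty}^- \cup \beta'
\]
in $B^4$, both with boundary $L$. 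In the double branched cover, each band lifts to a single $2$-handle, so $M_{F^+} = M_{F_0^+} \cup H$ and $M_{F^-} = M_{F_{\infty}^-} \cup H'$.

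The heart of the argument is to show that $H$ and $H'$ contribute to the intersection form with opposite signs, so that $H$ is non-positive (forcing $b^+_2(M_{F^+}) = b^+_2(M_{F_0^+}) = 0$) while $H'$ is non-negative (forcing $b^-_2(M_{F^-}) = b^-_2(M_{F_{\infty}^-}) = 0$). The geometric input is that the three double branched covers of $S^3$ over $L$, $L_0$, $L_{\infty}$ are three different Dehn fillings of a common knot exterior lifted from a neighbourhood of the crossing ball; this is the Ozsv\'ath--Szab\'o unoriented skein surgery triangle. It presents the covers of $L_0$ and $L_{\infty}$ as two distinct surgeries on a knot in the cover of $L$ whose framings differ by one, and the corresponding $2$-handle cobordisms $H, H'$ then carry self-intersections of opposite signs. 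Together with the inductive vanishing this completes the induction, and Theorem~\ref{thm3} is the knot case.

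The main obstacle is the sign/framing identification: one must track orientation data from the saddle cobordism in $S^3 \times I$ through its lift to the branched double cover and match it carefully against the framing conventions of the surgery triangle, so that $H$ and $H'$ land on the correct sides of the sign assignment. This is a local handle-calculus computation near the crossing, independent of the inductive hypothesis; once settled, the rest of the argument is purely formal.
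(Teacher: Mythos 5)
Your overall strategy coincides with the paper's: induct over the quasi-alternating recursion (the paper inducts on $\det(L)$, which is equivalent since the resolutions have strictly smaller determinant), realize the two saddle cobordisms $L_0\leadsto L$ and $L_1\leadsto L$ as single $2$-handle attachments in the double branched cover, show these handles are definite of opposite signs, and glue them to the definite surfaces supplied by the inductive hypothesis. However, the one step you defer --- the sign of the lifted handles --- is the entire content of the paper's Lemma~\ref{lem qa}, and your characterization of it is wrong in a way that breaks the proof. You assert that the sign/framing identification is ``a local handle-calculus computation near the crossing, independent of the inductive hypothesis.'' It cannot be: the intersection form of the $2$-handle cobordism $M_{C_B}$ is a single rational number $a'$ whose sign is a \emph{global} quantity. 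In the paper's computation, $a' = \det(g_{B_0})/\det(g_{B_1}) + 1$, where $g_{B_i}$ are Goeritz matrices of the full checkerboard surfaces, and its positivity is deduced from $|\det(g_{B_i})| = \det(L_i)$ together with the hypothesis $\det(L) = \det(L_0) + \det(L_1)$ (and $\det(L_i)\neq 0$), which forces $\det(g_{B_0})$ and $\det(g_{B_1})$ to have the same sign. The fact that the two slopes in the surgery triad differ by one does not give opposite signs --- two framings differing by one can both yield definite cobordisms of the same sign.

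A quick sanity check shows your local claim proves too much: if the sign assignment were determined locally at the crossing, then resolving crossings one at a time down to an unlink would show that \emph{every} link bounds surfaces in $B^4$ with positive definite and with negative definite double branched covers, contradicting the paper's Lemma~\ref{lem7} (e.g.\ $\Sigma(10_{124})$ bounds no positive definite $4$-manifold). So the determinant condition in the definition of quasi-alternating is not bookkeeping to be tracked through framing conventions; it is the hypothesis that makes the key sign come out right, and your plan as written has a genuine gap exactly there. To repair it you would need to reproduce something like the paper's Goeritz-matrix argument (or the corresponding linking-form computation in the surgery triad), at which point your proof becomes the paper's proof.
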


In light of Theorem \ref{thm greene}, we say that a knot $K$ is {\it 4-dimensionally alternating} if
$b^+_*(K)=b^-_*(K)=0$.
Theorem \ref{thm3} says that if a knot $K$ is quasi-alternating, then $K$ is 4-dimensionally alternating.
Then, is the inverse also true? The answer is no; we classify 4-dimensionally alternating knots up to 10 crossings, and give 4-dimensionally alternating knots which are not concordant to any quasi-alternating knot.
\begin{prop}
\label{prop1}
The knots $10_{139}$, $10_{152}$, $10_{154}$ and $10_{161}$ in Rolfsen's table are 4-dimensionally alternating
but not concordant to any quasi-alternating knot.
For any other knot $K$ with up to 10 crossings,
$K$ is 4-dimensionally alternating if and only if $K$ is quasi-alternating or slice.
\end{prop}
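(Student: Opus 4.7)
The plan is to divide the proof into three tasks: (i) handle the knots that are quasi-alternating or slice, (ii) eliminate the remaining non-quasi-alternating, non-slice candidates with at most 10 crossings from being 4-dimensionally alternating, and (iii) verify the two extra claims for each of the four listed knots $10_{139},10_{152},10_{154},10_{161}$. Task (i) is immediate: quasi-alternating knots are 4-dimensionally alternating by Theorem \ref{thm3}, and a slicing disc $D$ for a slice knot gives $M_D\cong B^4$, so $b^{\pm}_*(K)=0$. Using the known census of quasi-alternating and slice knots with at most 10 crossings, this reduces the remaining work to a short finite list of candidates.

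For task (ii), I would apply Theorem \ref{thm2} together with its mirror analogue $b^{-}_*(K)\geq -\sigma(K)/2-d(S^3_{1}(K))$ (obtained by feeding the mirror of $K$ into Theorem \ref{thm2}) to each remaining candidate. Since $\sigma$ and the Heegaard Floer correction terms are tabulated invariants, this is a finite case check, and in each case other than the four exceptional knots at least one of the two bounds forces $b^+_*(K)$ or $b^-_*(K)$ to be strictly positive.

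Task (iii) has two parts. To show $b^+_*(K)=b^-_*(K)=0$ for each of $10_{139},10_{152},10_{154},10_{161}$, I would exhibit explicit surfaces $F_{\pm}\subset B^4$ with $\partial F_{\pm}=K$ whose double branched covers have negative (respectively positive) semi-definite intersection form; natural candidates arise from band moves or ribbon surgeries that reduce the knot to a slice link, producing a Kirby diagram of $M_{F_{\pm}}$ whose linking form can be checked for definiteness. To rule out concordance to any quasi-alternating knot, I would use the fact (due to Manolescu and Ozsv\'ath) that every quasi-alternating knot is Heegaard Floer thin and hence satisfies $\tau(K')=-\sigma(K')/2$; since $\tau$ and $\sigma$ are both concordance invariants, a concordance between $K$ and a quasi-alternating knot would force this equality on $K$, and a quick look at the tabulated $\tau$ and $\sigma$ values for the four knots shows they violate it.

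The main obstacle is the surface construction in task (iii). The obstruction half of the argument is algorithmic once the relevant Heegaard Floer data are in hand, but producing definite surfaces in $B^4$ for knots that are themselves not quasi-alternating cannot be read off from planar diagrams and requires an ad hoc 4-dimensional handle analysis, done case by case for each of the four exceptional knots.
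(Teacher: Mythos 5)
Your overall decomposition (quasi-alternating/slice knots are automatic; obstruct the remaining seven candidates; verify the four exceptional knots by exhibiting definite surfaces and using $\tau\neq-\sigma/2$) matches the paper's structure, and your tasks (i) and the obstruction half of (iii) are exactly what the paper does. However, task (ii) contains a genuine gap: the Batson-type bound of Theorem \ref{thm2}, applied to $K$ and to $K^*$, does \emph{not} eliminate all seven remaining candidates. It works for $8_{19}$, $9_{42}$, $10_{128}$ and $10_{136}$ (after mirroring), but it is trivial for $10_{124}$, $10_{132}$ and $10_{145}$. For instance $10_{124}=T_{3,5}$ has $\sigma=-8$ and $V_0(T_{3,5})=2$, so $\sigma(K)/2-d(S^3_{-1}(K))=-4$ and the mirror version gives $4-2V_0(T_{3,5})=0$; both inequalities are vacuous. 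The paper handles these three knots by a different obstruction: it identifies $\Sigma(10_{124})$, $\Sigma(10_{132})$ and $\Sigma(10_{145})$ as $-S^3_1(3_1)$, $-S^3_{5/2}(3_1)$ and $-S^3_3(5_2)$ via Akbulut's algorithm applied to explicit ribbon surfaces, and then invokes Owens--Strle's theorem that $S^3_r(3_1)$ bounds a negative definite $4$-manifold only for $r\geq 4$, concluding that these double branched covers bound no positive definite $4$-manifold at all (for $10_{145}$ one first passes through an explicit negative definite cobordism to $S^3_3(3_1)$). Without some replacement for this step your finite case check does not close.

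A secondary issue is that you leave the construction of the definite surfaces for $10_{139}$, $10_{152}$, $10_{154}$, $10_{161}$ as an acknowledged ``main obstacle'' rather than carrying it out. The paper's solution is concrete: for each knot it gives one explicit ribbon surface whose double branched cover (computed by Akbulut's method) is negative definite, and obtains the positive definite surface by performing a single positive $H(2)$-move to a knot (e.g.\ $3_1^*$) already known to bound a positively definite-covered surface, then gluing the resulting $b_2=1$ cobordism. Your idea of reading off a Kirby diagram from band moves is the right mechanism, but as stated it is a plan rather than a proof.
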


\begin{remark}
If $K^*$ is the mirror of $K$, then $b^-(K)= b^+(K^*)$ and $b^-_*(K)= b^+_*(K^*)$. Hence we only need to
study $b^+(K)$ and $b^+_*(K)$, but we often use $b^-(K)$ and $b^-_*(K)$ for convenience.
\end{remark}

\begin{acknowledge}
The author was supported by JSPS KAKENHI Grant Number 15J10597.
\end{acknowledge}


\section{Relationship to various genera of knots}
In this section, we study the relationship of $b^{\pm}(K)$ and $b^{\pm}_*(K)$
to genera of knots. We start from orientable genera of knots.
The {\it 3-genus} $g_3(K)$ ( the {\it 4-genus} $g_4(K)$) of a knot $K$
is the minimal number of the genus of any orientable surface in $S^3$ (resp.\ in $B^4$)
with boundary $K$. 
Then Gordon-Litherland's theorem \cite{gordon-litherland} gives the following inequalities;
\begin{prop}
\label{prop2}
For any knot $K$, we have
$$
b^{\pm}(K) \leq g_3(K) \pm \frac{\sigma(K)}{2},
$$
and
$$
b^{\pm}_*(K) \leq g_4(K) \pm \frac{\sigma(K)}{2}.
$$
\end{prop}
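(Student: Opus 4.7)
The plan is to evaluate $b^{\pm}(K)$ and $b^{\pm}_*(K)$ on an orientable surface realizing $g_3(K)$ or $g_4(K)$ respectively, and to compute the intersection form of the resulting double branched cover directly. Let $F$ be an orientable surface of genus $g$ with $\partial F = K$, either embedded in $S^3$ or properly embedded in $B^4$. I claim that
\[
b_2(M_F) = 2g \quad \text{and} \quad \sigma(M_F) = \sigma(K).
\]
Granting these two identities gives $b_2^{\pm}(M_F) = g \pm \sigma(K)/2$, and since $b^{\pm}(K)$ (respectively $b^{\pm}_*(K)$) is a minimum over all spanning surfaces, choosing $F$ to realize $g_3(K)$ (respectively $g_4(K)$) yields the four desired bounds at once.

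The rank identity $b_2(M_F) = 2g$ follows from a standard Mayer--Vietoris (or transfer) argument for the double branched cover, which identifies $H_2(M_F;\mathbb{Q}) \cong H_1(F;\mathbb{Q})$; since $F$ is orientable of genus $g$ with one boundary component, this has rank $2g$. The signature identity is the orientable case of Gordon--Litherland's theorem: the intersection form on $H_2(M_F)$ is the symmetrized Seifert form $V+V^{T}$, whose signature equals $\sigma(K)$ by definition.

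The main obstacle is that Gordon--Litherland's original statement concerns surfaces $F \subset S^3$ (pushed into $B^4$), which handles the $b^{\pm}(K)$ bound directly but not the $b^{\pm}_*(K)$ bound, where $F$ need not be a Seifert surface. For a general orientable $F$ properly embedded in $B^4$, I would run a cobordism argument: cap $F$ off with a Seifert surface $F_0$ sitting in a second copy of $B^4$ (with reversed orientation) to produce a closed orientable surface $\Sigma \subset S^4$. Applying Novikov additivity to $M_\Sigma = M_F \cup_{\Sigma_2(K)} (-M_{F_0})$, combined with $\sigma(M_\Sigma) = 0$ (since $\Sigma$ is closed orientable in $S^4$, so $[\Sigma]\cdot[\Sigma] = 0$ and the G-signature formula for double branched covers vanishes) and the already-established $\sigma(M_{F_0}) = \sigma(K)$, forces $\sigma(M_F) = \sigma(K)$, completing the proof.
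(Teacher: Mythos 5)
Your proposal is correct and follows essentially the same route as the paper: evaluate on a genus-minimizing orientable surface $F$, use $b_2(M_F)=b_1(F)=2g$ together with $\sigma(M_F)=\sigma(K)$, and combine to get $b_2^{\pm}(M_F)=g\pm\sigma(K)/2$. The only difference is that the paper simply cites Gordon--Litherland for $\sigma(M_F)=\sigma(K)$ for any orientable $F$ properly embedded in $B^4$, whereas you supply the capping-off/Novikov-additivity argument to extend the Seifert-surface case to general properly embedded surfaces; that extra step is sound and arguably more careful than the paper's bare citation.
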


\begin{proof}
In \cite{gordon-litherland},
Gordon and Litherland prove that
for any orientable surface $F$ in $B^4$ with boundary $K$,
we have 
$$
\sigma(M_F)= \sigma(K),
$$
where $\sigma(M_F) = b_2^+(M_F)-b^-_2(M_F)$.
In addition, let $b_i$ denote the $i$-th Betti number, and then we can verify that
$$
2g(F)=b_1(F)=b_2(M_F)=b_2^+(M_F)+b^-_2(M_F).
$$
By these equalities, we have 
$$
b_2^{\pm}(M_F) = \frac{1}{2}(b_2(M_F) \pm \sigma(M_F)) = g(F) \pm \frac{\sigma(K)}{2}.
$$
Hence, if $F$ is in $S^3$ and has genus $g_3(K)$, then we have
$$
b^{\pm}(K) \leq 
b_2^{\pm}(M_F) = g_3(K) \pm \frac{\sigma(K)}{2}.
$$
Similarly, if $F$ is in $B^4$ and has genus $g_4(K)$, we have
$$
b^{\pm}_*(K) \leq 
b_2^{\pm}(M_F) = g_4(K) \pm \frac{\sigma(K)}{2}.
$$
\end{proof}
Next we consider non-orientable genera of knots.
The {\it non-orientable 3-genus} $\gamma_3(K)$ of a knot $K$
is the minimal number of the first Betti number of any non-orientable surface in $S^3$
with boundary $K$. 
Then we have the following;
\begin{prop}
\label{prop3}
For any knot $K$, we have
$$
b^+(K) + b^-(K) \leq \gamma_3(K),
$$
and
$$
b^+_*(K) + b^-_*(K) \leq \gamma_4(K).
$$
\end{prop}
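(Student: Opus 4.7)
The plan is to argue in the same spirit as Proposition~\ref{prop2}, but using only the rank of $H_2(M_F)$ and not its signature (which in the non-orientable case picks up the normal Euler number correction of Gordon--Litherland and no longer equals $\sigma(K)$). The key input is the extension of the identity
$$
b_2(M_F) = b_1(F)
$$
to the case where $F$ is a non-orientable surface properly embedded in $B^4$ with $\partial F = K$. Granting this, both inequalities follow immediately: for any such $F$,
$$
b^+_*(K) + b^-_*(K) \;\leq\; b_2^+(M_F) + b_2^-(M_F) \;=\; b_2(M_F) \;=\; b_1(F),
$$
so minimizing over $F$ realizing $\gamma_4(K)$ yields the second bound. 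The first bound is obtained identically, starting from a non-orientable spanning surface for $K$ in $S^3$ and pushing its interior into $B^4$ to form $M_F$.

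To verify the identity in the non-orientable case, I would run the Euler characteristic calculation
$$
\chi(M_F) = 2\chi(B^4) - \chi(F) = 1 + b_1(F),
$$
using that $F$ is connected with one boundary component, and combine it with the vanishing $b_1(M_F) = b_3(M_F) = 0$. The equality $b_1(M_F) = b_3(M_F)$ is formal: it comes from the long exact sequence of the pair $(M_F, \partial M_F)$ together with Poincar\'e--Lefschetz duality, using that $\partial M_F = \Sigma_2(K)$ is a rational homology sphere. The actual vanishing of $b_1(M_F)$ follows from a transfer argument applied to the unbranched double cover $M_F \setminus \widetilde{F} \to B^4 \setminus F$, since $H_1(B^4 \setminus F;\mathbb{Q})$ is generated by a meridian of $F$ that becomes null-homologous in $M_F$. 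Plugging back into the Euler characteristic identity then gives $b_2(M_F) = b_1(F)$.

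The only delicate point is the vanishing of $b_1(M_F)$ when $F$ is non-orientable; everything else is formal and parallels Proposition~\ref{prop2} line by line. This vanishing is implicit in \cite{gordon-litherland}, so in the final writeup it can simply be cited, keeping the proof of Proposition~\ref{prop3} as short as that of Proposition~\ref{prop2}.
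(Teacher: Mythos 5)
Your proposal is correct and follows the paper's own argument essentially verbatim: the paper also reduces both inequalities to the identity $b_1(F)=b_2(M_F)=b_2^+(M_F)+b_2^-(M_F)$ for an arbitrary (possibly non-orientable) spanning surface $F$ in $B^4$ and then minimizes over $F$ realizing $\gamma_3(K)$ or $\gamma_4(K)$. The only difference is that the paper states this identity with ``we can verify that'' while you supply the verification (Euler characteristic plus the transfer argument for $b_1(M_F)=0$), which is a correct and welcome elaboration rather than a different route.
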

\begin{proof}
For any surface $F$ in $B^4$ with boundary $K$,
we can verify that 
$$
b_1(F)=b_2(M_F) = b^+_2(M_F)+b^-_2(M_F).
$$
Hence if $F$ is a non-orientable surface in $S^3$ with $\partial F =K$ and $b_1(F)=\gamma_3(K)$,
then we have
$$
\gamma_3(K)=b_1(F)=b^+_2(M_F)+b^-_2(M_F) \geq b^+(K)+b^-(K).
$$  
Similarly, we can prove that 
$\gamma_4(K) \geq b^+_*(K)+b^-_*(K)$.
\end{proof}


\section{Dealternating number and alternation number}
We next consider 
the dealternating number and the alternation number.
We first recall the definition of these invariants.
A knot diagram is {\it $n$-almost alternating} if $n$ crossing changes in the diagram
turn the diagram into an alternating knot diagram. 
We say that
a knot $K$ has the {\it dealternating number} $n$  
if $K$ has 
an $n$-almost alternating diagram 
and no $k$-almost alternating diagram for any $k<n$.
We denote the dealternating number of $K$ by $\dalt(K)$.
The {\it alternation number} $\alt(K)$ of a knot $K$
is the minimal number of the Gordian distance between $K$ and any alternating knot.
Then we have the following inequalities.
\begin{prop}
\label{prop4}
$
b^+(K) + b^-(K) \leq \dalt(K).
$
\end{prop}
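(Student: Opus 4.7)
The plan is to prove the inequality by induction on $n = \dalt(K)$. The base case $n = 0$ is immediate: $K$ is alternating, so Theorem~\ref{thm greene} gives $b^+(K) + b^-(K) = 0$.

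For the inductive step, suppose the inequality holds for all knots with dealternating number less than $n$, and let $\dalt(K) = n \geq 1$. Fix an $n$-almost alternating diagram $D$ of $K$ and one of its bad crossings $c$; performing the crossing change at $c$ yields a knot $K_1$ together with an $(n-1)$-almost alternating diagram, so $\dalt(K_1) \leq n-1$ and the inductive hypothesis gives $b^+(K_1) + b^-(K_1) \leq n-1$. The entire argument then reduces to the following local estimate: if $K$ and $K_1$ differ by a single crossing change, then $b^+(K) + b^-(K) \leq b^+(K_1) + b^-(K_1) + 1$.

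To prove this local estimate, I would choose spanning surfaces $F_1^+$ and $F_1^-$ of $K_1$ in $S^3$ realizing $b^+(K_1) = b^+_2(M_{F_1^+})$ and $b^-(K_1) = b^-_2(M_{F_1^-})$, and after an ambient isotopy arrange that each has a standard half-twisted band at the crossing $c$. Flipping the handedness of this half-twist produces new spanning surfaces $F^+, F^-$ of $K$, since the effect on the boundary is exactly a crossing change at $c$. Crucially, this flip alters the Gordon--Litherland intersection form of the double branched cover by a single rank-one perturbation whose sign is determined solely by the direction of the flip, not by the ambient surface: if the flip is $+\to-$, the perturbation is negative semi-definite, so by Cauchy interlacing $b^+_2$ does not increase and $b^-_2$ increases by at most one. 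Therefore $b^+_2(M_{F^+}) \leq b^+(K_1)$ and $b^-_2(M_{F^-}) \leq b^-(K_1) + 1$, which add to give $b^+(K) + b^-(K) \leq b^+(K_1) + b^-(K_1) + 1$; the case $-\to+$ is symmetric.

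The main obstacle is the geometric setup: one must justify that any spanning surface of $K_1$ in $S^3$ can be isotoped into half-twisted-band form at a prescribed crossing without changing the intersection form of its double branched cover, and then verify that the local flip indeed corresponds to a rank-one perturbation of the predicted sign. Once this local model is established, the Cauchy interlacing step is routine and the induction closes. An alternative would be to work directly with the two checkerboard surfaces of $D$, whose Goeritz matrices differ from those of the alternating reference diagram by $n$ rank-one perturbations of fixed sign; this more naive approach, however, appears to give only the separate bounds $b^+(K), b^-(K) \leq n$, and extracting the sum bound would then require an additional duality argument between the black and white planar graphs.
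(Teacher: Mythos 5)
There is a genuine gap, and it sits exactly where you flag ``the main obstacle'': the local estimate $b^+(K)+b^-(K)\le b^+(K_1)+b^-(K_1)+1$ for a single crossing change is not established by your argument. The surfaces $F_1^{\pm}$ realizing $b^{\pm}(K_1)$ are arbitrary spanning surfaces in $S^3$ with no relation to the diagram $D_1$ or to the chosen crossing $c$; there is no reason they can be isotoped so that the two strands of $c$ cobound a half-twisted band lying inside the surface, and in general they cannot (the two strands meeting at a crossing may be far apart on a given spanning surface, and forcing a band there alters the surface and hence its form). Since the whole induction rests on this unproved local model, the proof does not close. There is also strong internal evidence that the local estimate is out of reach: it would immediately give $b^+(K)+b^-(K)\le \alt(K)$ by induction along an unknotting sequence to an alternating knot, hence also the bound by $\dalt(K)$ since $\alt(K)\le\dalt(K)$; but the paper only obtains the weaker $b^+_*(K)+b^-_*(K)\le 2\lceil \alt(K)/2\rceil$ (Proposition \ref{prop5}), and even for that it must pass to the $4$-dimensional invariants and a cobordism construction. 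Your reduction would therefore prove something strictly stronger than anything in the paper.

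The paper's actual argument is essentially the ``naive alternative'' you mention and set aside, and the extra duality you anticipate needing is exactly what makes it work. One fixes the single $n$-almost alternating diagram $D$ and its two checkerboard surfaces $B$ and $W$; since all $n$ crossing changes occur inside this one diagram, each changed crossing is of Type I or Type II and affects the Euler number of only one of $B$, $W$. Using the Gordon--Litherland formula $\sigma(G_F)=\sigma(K)-e(F)/2$, the invariance of $b_1$ of the checkerboard surfaces under crossing changes, and the fact that for the resulting alternating diagram one checkerboard Goeritz form is positive definite and the other negative definite, one gets $n_1\ge b^-_2(M_B)+\frac{1}{2}(\sigma(K)-\sigma(K'))$ and $n_2\ge b^+_2(M_W)+\frac{1}{2}(\sigma(K')-\sigma(K))$; the signature terms cancel in the sum, giving $n\ge b^+_2(M_W)+b^-_2(M_B)\ge b^+(K)+b^-(K)$. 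Note that this bounds the sum by a $b^+$ coming from the white surface and a $b^-$ coming from the black surface of the same diagram --- it never requires any single surface to be a minimizer adapted to the crossings, which is precisely what your approach cannot arrange.
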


\begin{prop}
\label{prop5}
$
b_*^+(K) + b_*^-(K) \leq 2 \lceil \frac{\alt(K)}{2} \rceil.
$
\end{prop}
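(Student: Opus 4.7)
The plan is to realize the Gordian distance $\alt(K)=n$ as a cobordism $\Sigma\subset S^3\times[0,1]$ between $K$ and a suitably chosen alternating knot $K'$, and then glue it to spanning surfaces of $K'$ supplied by Theorem~\ref{thm greene} in order to produce surfaces for $K$ in $B^4$ of small complexity.

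Fix an alternating knot $K'$ with $d_G(K,K')=n$. By Theorem~\ref{thm greene} applied to $K'$, choose surfaces $F^+,F^-\subset S^3$ with $\partial F^{\pm}=K'$ satisfying $b_2^+(M_{F^+})=0$ and $b_2^-(M_{F^-})=0$, and push them into $B^4$. The crucial step is to construct a cobordism $\Sigma$ from $K$ to $K'$ with $b_1(\Sigma)\leq 2\lceil n/2\rceil+1$: group the $n$ crossing changes into $\lceil n/2\rceil$ pairs (padding with a trivial crossing change if $n$ is odd) and realize each pair by a cobordism piece of Euler characteristic $-2$, e.g.\ by two saddle moves whose intermediate link is connected. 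Chaining these pieces yields $\chi(\Sigma)=-2\lceil n/2\rceil$ and hence $b_1(\Sigma)=2\lceil n/2\rceil+1$.

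Set $G^{\pm}:=\Sigma\cup_{K'}F^{\pm}$, which are surfaces in $B^4$ bounded by $K$. Their double branched covers decompose as $M_{G^{\pm}}=M_\Sigma\cup M_{F^{\pm}}$, glued along the double branched cover of $S^3$ over $K'$, which is a $\mathbb{Q}$-homology sphere. Mayer--Vietoris (over $\mathbb{Q}$) together with Novikov additivity of signature yield
\[
b_2^{\pm}(M_{G^{\pm}}) \;=\; b_2^{\pm}(M_\Sigma)+b_2^{\pm}(M_{F^{\pm}}) \;=\; b_2^{\pm}(M_\Sigma),
\]
so that
\[
b_*^+(K)+b_*^-(K) \;\leq\; b_2^+(M_{G^+})+b_2^-(M_{G^-}) \;=\; b_2(M_\Sigma).
\]
A homology computation---using $b_0(M_\Sigma)=1$, $b_1(M_\Sigma)=0$, $b_3(M_\Sigma)=1$, $b_4(M_\Sigma)=0$, which follow from the $\mathbb{Q}$-homology sphere boundary components together with the Euler characteristic identity $\chi(M_\Sigma)=-\chi(\Sigma)=b_1(\Sigma)-1$---gives $b_2(M_\Sigma)=b_1(\Sigma)-1\leq 2\lceil n/2\rceil$, completing the estimate.

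The main obstacle is the construction of $\Sigma$ with the claimed first Betti number. A naive realization of each crossing change by a genus-$1$ orientable cobordism produces only $b_1(\Sigma)=2n+1$ and yields the weaker bound $b_*^+(K)+b_*^-(K)\leq 2n$. Obtaining the sharp $\lceil n/2\rceil$ factor requires arranging that pairs of crossing changes be jointly realizable with Euler characteristic $-2$, exploiting the freedom in pairing up crossings (so that the joint smoothing of each pair is a connected intermediate link) and, where necessary, employing non-orientable band surgeries to absorb the Euler characteristic cost.
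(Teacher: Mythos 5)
Your overall strategy coincides with the paper's: pick an alternating knot $K'$ at Gordian distance $n=\alt(K)$, build a low-complexity (possibly non-orientable) cobordism $\Sigma$ realizing the $n$ crossing changes, cap it off with the definite spanning surfaces for $K'$ supplied by Theorem~\ref{thm greene}, and compare Betti numbers via Mayer--Vietoris and Novikov additivity across the rational homology sphere $\Sigma(K')$. That part of your argument, including the identity $b_2(M_\Sigma)=b_1(\Sigma)-1$, matches the paper and is correct.

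The genuine gap is precisely the step you yourself flag as ``the main obstacle'': the existence of a cobordism $\Sigma$ from $K$ to $K'$ with $b_1(\Sigma)=2\lceil n/2\rceil+1$. You assert that each pair of crossing changes can be realized by ``two saddle moves whose intermediate link is connected,'' but you offer no construction, and this is exactly the nontrivial content of the proposition. Note that the claim cannot be arranged orientably: an orientable connected cobordism between two knots with $\chi=-2$ has genus one, so two crossing changes realized this way would force $|g_4(K)-g_4(K')|\le 1$, which already fails for $T_{2,5}$ versus the unknot (two crossing changes, but $g_4(T_{2,5})=2$). So non-orientable bands are forced, and the assertion that an \emph{arbitrary} pair of crossing changes can be traded for just two band surgeries is the statement that needs proof. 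The paper closes this gap by invoking the argument of \cite[Proposition 2.3]{murakami-yasuhara}: one passes to the self-transverse immersed annulus $A\subset S^3\times[0,1]$ traced by the homotopy, with $n$ double points, and resolves the double points in pairs --- replacing a neighborhood in $A$ of an arc joining two double points (three disks, whose boundary in a small $4$-ball is an unknot together with two meridians of it) by a disk plus an annulus --- so that each pair costs only $2$ in Euler characteristic, with one leftover double point resolved by a Hopf band when $n$ is odd. Without this or an equivalent construction, your argument only yields the weaker bound $b_*^+(K)+b_*^-(K)\le 2\alt(K)$, as you observe in your final paragraph.
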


The aim of this section is to prove the above two propositions.
To prove Proposition \ref{prop4},
we use the Goeritz form for surfaces in $S^3$, which is introduced in \cite{gordon-litherland}. 
For a surface $F$ in $S^3$, let $G_F$ denote the Goeritz form for $F$,
$\sigma(G_F)$ the signature of $G_F$, and $e(F)$ the Euler number of $F$.
Then it is proved in \cite{gordon-litherland} that
$
\sigma(G_F)= \sigma(M_F) = \sigma(K) - \frac{e(F)}{2}.
$ 

\def\proofname{Proof of Proposition \ref{prop4}}

\begin{proof}
Suppose that a diagram $D$ for a knot $K$ is deformed into an alternating diagram
$D'$ for a knot $K'$ by $n$ crossing changes. 
Note that $D$ and $D'$ have the same projection.
We choose an orientation of $K$ and a checkerboard coloring of $D$ arbitrarily,
and choose those of $K'$ and $D'$ so that the orientation
and coloring on the projection induced by $K'$ and $D'$ are equal to ones induced by
$K$ and $D$. Let $B$ (and $W$) denote the spanning surface for $K$ in $S^3$
dedicated by the black regions (resp.\ white regions) on $D$. 
Similarly, we take the spanning surfaces $B'$ and $W'$ for $K'$ from the checkerboard coloring of $D'$ respectively. 
Here we note that since $D'$ is an alternating diagram, one of $G_{B'}$ and $G_{W'}$ is 
positive definite and the other is negative definite.
We may assume that $G_{B'}$ is positive definite.

We consider the value of $\sigma(G_B)$ and $\sigma(G_W)$. 
On the diagrams $D$ and $D'$, we divide $n$ crossings performed crossing change
into two types; Type I and Type II in Figure \ref{type}.
In addition, we assign $+1$ or $-1$ to each crossing as shown in Figure \ref{sign},
which is called the {\it sign} of a crossing.  
Let $n_1$ (and $n_2$) denote the number of Type I (resp.\ Type II) crossings.
Then $n=n_1+n_2$. Moreover,  the Euler number of $B$ and $B'$ are computed by counting the sign of Type II crossings, and we see that 
$$
|\frac{e(B')}{2}-\frac{e(B)}{2}| \leq 2n_1.
$$
Similarly, the Euler number of $W$ and $W'$ are computed by counting the sign of Type I crossings and we have
$$
|\frac{e(W')}{2}-\frac{e(W)}{2}| \leq 2n_2.
$$
Now, since $b_1(B) = b_1 (B') = \sigma(G_{B'})$, we see that
\begin{eqnarray*}
\sigma(G_B)&=&\sigma(K) - \frac{e(B)}{2}\\
\ &\geq& \sigma(K) - \frac{e(B')}{2} -2n_1\\
\ &=& \sigma(K) - \sigma(K') + b_1(B) - 2n_1.
\end{eqnarray*}
This implies that
$$
n_1 \geq \frac{1}{2}(b_1(B)- \sigma(G_B))  + \frac{1}{2}(\sigma(K) - \sigma(K'))
= b^-_2(M_B) +\frac{1}{2}(\sigma(K) - \sigma(K')).
$$
Similarly, since $b_1(W) = b_1 (W') = -\sigma(G_{W'})$, we see that
$$
\sigma(G_W) \leq \sigma(K) - \sigma(K') - b_1(W) + 2n_2.
$$
This implies that
$$
n_2 \geq \frac{1}{2}(b_1(W) + \sigma(G_W))  + \frac{1}{2}(\sigma(K') - \sigma(K))
= b^+_2(M_W) +\frac{1}{2}(\sigma(K') - \sigma(K)).
$$
Since $n=n_1 + n_2$, we have
$$
n=n_1+n_2 \geq b^+_2(M_W) + b^-_2(M_B) \geq b^+(K) + b^-(K).
$$
This completes the proof.
\end{proof}

\begin{figure}[htbp]
\begin{minipage}[]{0.4\hsize}
\begin{center}
\includegraphics[scale = 0.7]{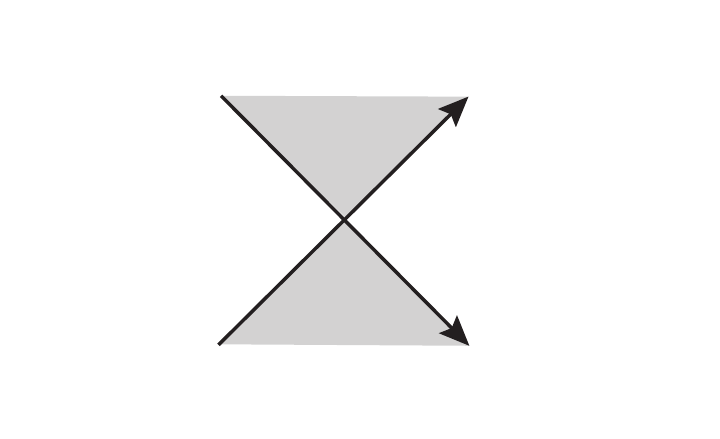}
Type I
\end{center}
 \end{minipage}
 \begin{minipage}[]{0.4\hsize}
\begin{center}
\includegraphics[scale = 0.7]{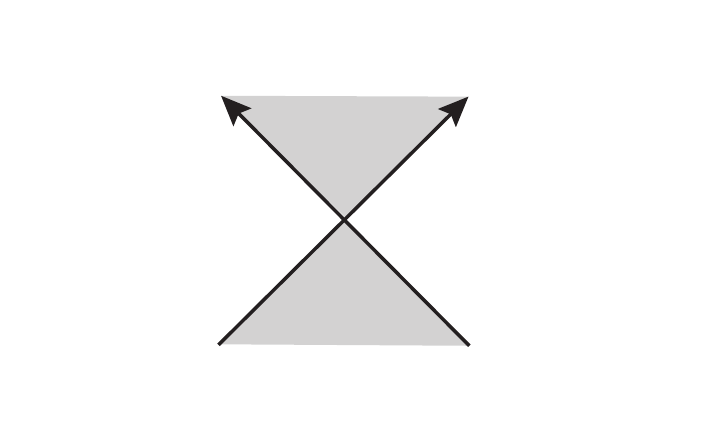}
Type II
\end{center}
 \end{minipage}
\caption{\label{type} Type of crossings}
\end{figure}

\begin{figure}[htbp]
\begin{minipage}[]{0.4\hsize}
\begin{center}
\includegraphics[scale = 0.7]{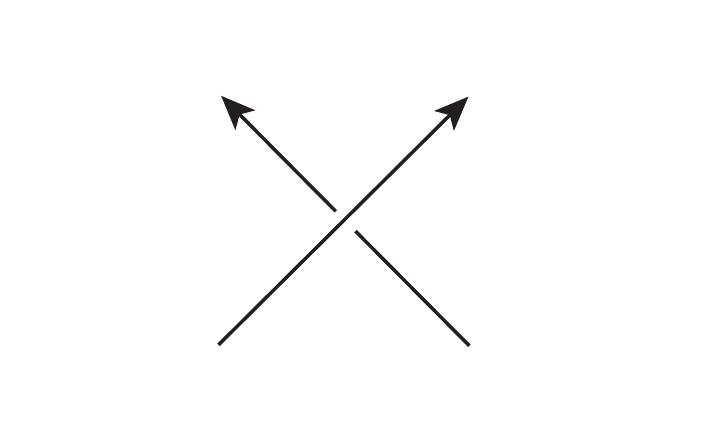}
$+1$
\end{center}
 \end{minipage}
 \begin{minipage}[]{0.4\hsize}
\begin{center}
\includegraphics[scale = 0.7]{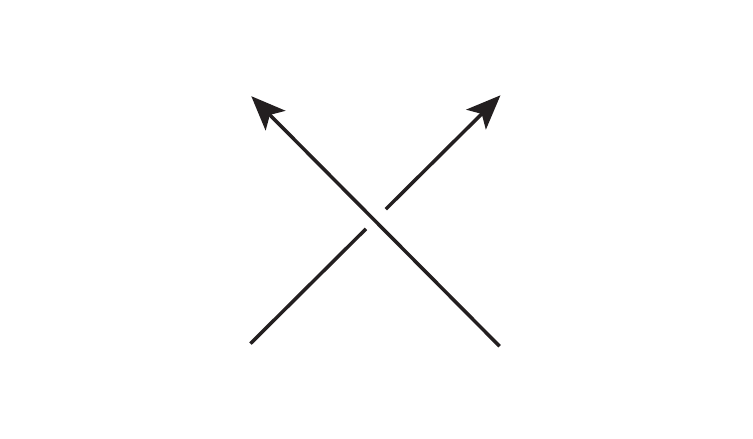}
$-1$
\end{center}
 \end{minipage}
\caption{\label{sign} Sign of crossings}
\end{figure}

Next we prove Proposition \ref{prop5}.

\def\proofname{Proof of Proposition \ref{prop5}}

\begin{proof}
Suppose that a knot $K$ is deformed into an alternating knot $J$ after $n$ crossing changes.
Then the regular homotopy dedicated by the crossing changes gives a self-transverse immersed annulus $A$ in $S^3$ $\times [0,1]$ such that 
the number of its self-intersections is $n$ and 
$(\partial (S^3 \times [0,1]), \partial A)$ is diffeomorphic to 
the disjoint union $(S^3,K) \amalg (S^3, J^*)$.
By the argument in the proof of \cite[Proposition 2.3]{murakami-yasuhara},
we can construct a (possibly non-orientable) cobordism $C$ in $S^3 \times [0,1]$
from $J$ to $K$ which satisfies
$b_1(C) = 2 \lceil \frac{n}{2} \rceil +1$.
Here, since $J$ is alternating, $J$ bounds two surfaces $P$ and $N$ in $B^4$ such that
$M_P$ and $M_N$ are positive definite and negative definite respectively.
Now, by gluing $C$ with $P$ and $N$ along $J$, we obtain two surfaces $C \cup P$
and $C \cup N$ in $B^4$ with boundary $K$.
Since $M_{C\cup P} = M_C \cup M_P$ and
$M_{C\cup N} = M_C \cup M_N$, it follows from elementary homology theory that 
\begin{enumerate}
\item $b^-_2(M_{C \cup P}) = b^-_2(M_{C})$,
\item $b^+_2(M_{C \cup N}) = b^+_2(M_{C})$, and
\item $b^+_2(M_C)+b^-_2(M_C) = b_1(C)-1$.
\end{enumerate}
These imply that 
$2\lceil \frac{n}{2} \rceil = b^+_2(M_{C\cup N})+b^-_2(M_{C\cup P}) \geq b^+_*(K)+b^-_*(K)$.
\end{proof}


\section{Proof of main theorems}

In this section, we prove Theorem \ref{thm2} and Theorem \ref{thm3}.
We start from Theorem \ref{thm2}.

\def\proofname{Proof of Theorem \ref{thm2}}

\begin{proof}
The inequality $\gamma_4(K) \geq b^+_*(K)$ is given by Proposition \ref{prop3}.
We prove $b^+_*(K) \geq \sigma(K)/2 - d(S^3_{-1}(K))$.

Let $F$ be a surface in $B^4$ with boundary $K$.
We first assume that $b_1(F)$ is odd.
Then $F$ is non-orientable, and 
it follows from \cite[Theorem 1.5]{batson} that 
\begin{equation}
b_1(F) \geq \frac{e(F)}{2} - 2d(S^3_{-1}(K)).
\end{equation}
Moreover, as mentioned in Section 3, the equality 
$$  
\sigma(M_F) = \sigma(K) - \frac{e(F)}{2}
$$
also holds.
Combining them, we have
\begin{equation}
\sigma(K)- 2d(S^3_{-1}(K)) \leq b_1(F) + \sigma (M_F) = 2b^+_2(M_F).
\end{equation}
Next we assume that $b_1(F)$ is even.
Then, by taking the boundary connected sum of $F$ with a M\"{o}bius band in $B^4$ with boundary the unknot and Euler number $+2$, we have a non-orientable surface $F'$ in $B^4$ with boundary $K$ such that $b_1(F')= b_1(F) +1$ and $e(F')= e(F)+2$.
By applying \cite[Theorem 1.5]{batson} to $F'$, we have
$$
b_1(F)+1 \geq \frac{e(F)+2}{2} - 2d(S^3_{-1}(K)).
$$ 
This inequality is equivalent to the inequality (1), and
hence the inequality (2) also holds in this case.
This completes the proof.
\end{proof}

Next, we prove Theorem \ref{thm3}.
We first recall quasi-alternating links.
For a link $L$, fix a diagram of $L$ and choose a crossing on the diagram.
Then we obtain two links $L_0$ and $L_1$ by replacing the crossing by
the two simplifications shown in Figure \ref{resolutions}. 
We call the links $L_0, L_1$ a {\it pair of resolutions} for $L$.
The set $\mathcal{Q}$ of {\it quasi-alternating links} is the smallest set of links which
satisfies the following properties:
\begin{enumerate}
\item the unknot is in $\mathcal{Q}$
\item the set $\mathcal{Q}$ is closed under the following operation. 
Suppose $L$ is any link which
has a pair of resolutions $L_0, L_1$ with the following properties:
\begin{itemize}
\item $L_0, L_1 \in \mathcal{Q}$,
\item$\det(L_0), \det(L_1) \neq 0$,
\item $\det(L) = \det(L_0) + \det(L_1)$;
\end{itemize}
then $L \in \mathcal{Q}$.
\end{enumerate}
Here $\det(L)$ denotes the determinant of $L$;
namely, if we denote the double branched cover of $S^3$ over $L$ by $\Sigma(L)$,
then 
$$
\det(L) := 
\left\{
\begin{array}{ll}
|H_1(\Sigma(L);\Bbb{Z})|&(\text{if $H_1(\Sigma(L);\Bbb{Z})$ is finite})\\
0&(\text{otherwise})
\end{array}
\right..
$$
\begin{figure}[htbp]
\hspace{-15mm}
\begin{minipage}[]{0.3\hsize}
\includegraphics[scale = 0.7]{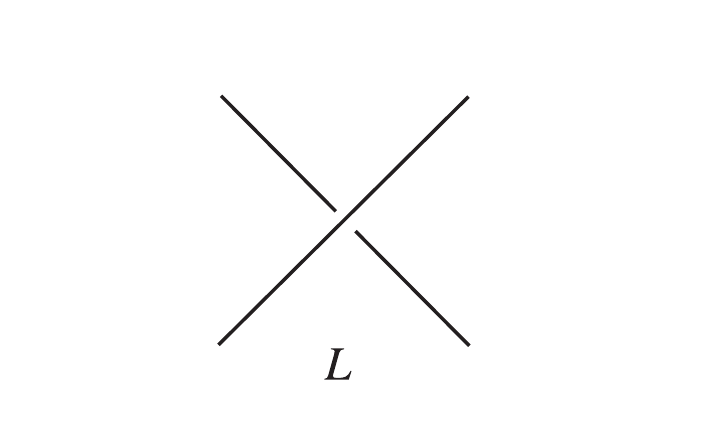}
\end{minipage}
\begin{minipage}[]{0.3\hsize}
\includegraphics[scale = 0.7]{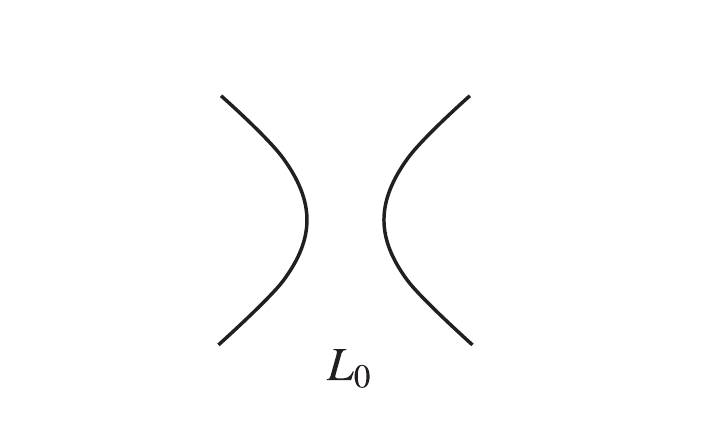}
 \end{minipage}
 \begin{minipage}[]{0.3\hsize}
\includegraphics[scale = 0.7]{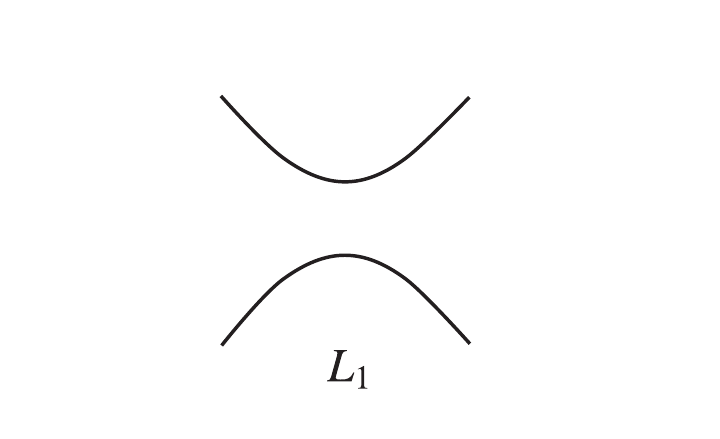}
 \end{minipage}
\caption{\label{resolutions} Resolutions for a link $L$}
\end{figure}
\begin{figure}[htbp]
\hspace{-15mm}
\begin{minipage}[]{0.3\hsize}
\includegraphics[scale = 0.7]{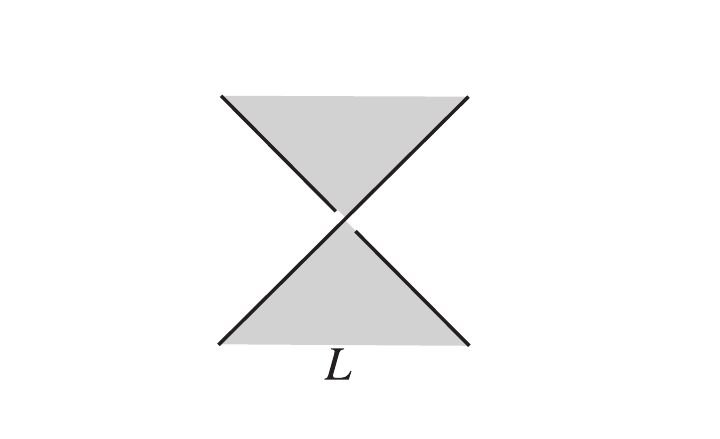}
\end{minipage}
\begin{minipage}[]{0.3\hsize}
\includegraphics[scale = 0.7]{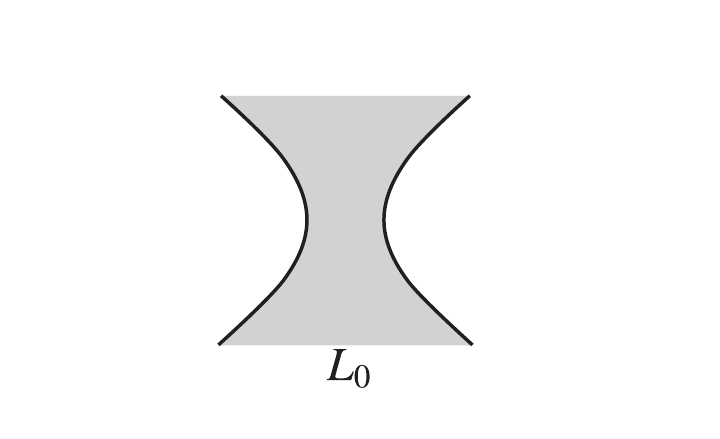}
 \end{minipage}
 \begin{minipage}[]{0.3\hsize}
\includegraphics[scale = 0.7]{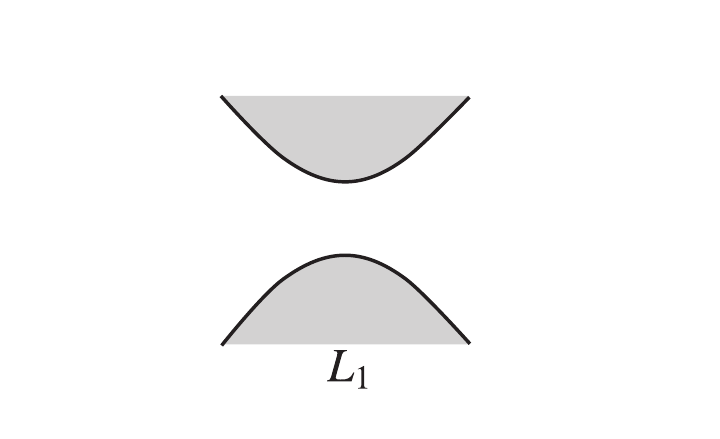}
 \end{minipage}
\caption{\label{coloring}}
\end{figure}

For a quasi-alternating link $L=L_{\emptyset}$ and its resolutions $L_0$, $L_1$ satisfying the property (2),
choose the checkerboard coloring of their diagrams 
as shown in Figure \ref{coloring}. 
For each $i \in \{ \emptyset, 0 ,1\}$,
let $B_{i}$ (and $W_{i}$) denote the spanning surface for $L_{i}$ in $S^3$
dedicate by the black regions (resp.\ white regions).
Then the differences $B_{\emptyset}-B_1$ and $W_{\emptyset}-W_0$ can be
regarded as cobordisms in $S^3 \times [0,1]$ from $L_0$ and $L_1$ to $L$
respectively.  We denote these cobordisms by $C_B$ and $C_W$ respectively.
Theorem \ref{thm3} follows from the following lemma.
\begin{lem}
\label{lem qa}
The double branched cover $M_{C_B}$ is positive definite, and $M_{C_W}$ is negative definite. 
\end{lem}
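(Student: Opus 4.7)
The two statements are mirror to each other under swapping the two checkerboard colors, so I would focus on proving that $M_{C_B}$ is positive definite.

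First, I would identify $C_B$ explicitly as an elementary cobordism. Since $B_\emptyset$ and $B_1$ are checkerboard spanning surfaces that agree outside a small neighborhood of the distinguished crossing, their ``difference'' $C_B \subset S^3 \times [0,1]$ is a product cobordism away from that neighborhood and contributes a single saddle (a $2$-dimensional $1$-handle) inside it. Hence $C_B$ is an index-$1$ elementary cobordism from $L_0$ to $L$. Taking double branched covers, $M_{C_B}$ is identified outside the crossing region with $\Sigma(L_0) \times I$, while inside it is the double cover of a $4$-ball branched over a saddle disk, which is a single $4$-dimensional $2$-handle. Thus $M_{C_B} = \Sigma(L_0) \times I \cup h^2$, where $h^2$ is attached along a framed curve $\gamma \subset \Sigma(L_0)$, and in particular $b_2(M_{C_B}) \le 1$.

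Next I would compute the intersection form. If $[\gamma]$ is non-torsion in $H_1(\Sigma(L_0);\mathbb{Z})$, then $h^2$ contributes nothing to $H_2$ and the form is trivially (positive) definite. Otherwise $b_2(M_{C_B}) = 1$ and the intersection form is a single integer $n$ equal to the framing of $h^2$ corrected by the rational self-linking of $\gamma$ in $\Sigma(L_0)$. Using the Gordon--Litherland identification $\sigma(G_F) = \sigma(M_F)$ together with the fact that the Goeritz matrix of $B_\emptyset$ differs from that of $B_1$ by a rank-one modification at the entry corresponding to the resolved crossing, I would read off $n$ locally from the checkerboard data.

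The crux of the argument, and the main obstacle, is showing $n > 0$, which is precisely where the quasi-alternating hypothesis enters. Running the same analysis on $C_W$ produces a $2$-handle cobordism from $\Sigma(L_1)$ to $\Sigma(L)$ with framing $n'$; because $B$ and $W$ correspond to the two opposite colorings, the sign conventions for $n$ and $n'$ are opposite. The homology exact sequence of a $2$-handle attachment expresses $|H_1(\Sigma(L))|$ in terms of $|H_1(\Sigma(L_i))|$ and $|n|$ (respectively $|n'|$), and substituting $\det(L_i) = |H_1(\Sigma(L_i))|$ together with the quasi-alternating identity $\det(L) = \det(L_0) + \det(L_1)$ with both summands strictly positive forces $n$ and $n'$ to have opposite signs. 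A separate sign check, obtained by orienting the explicit local model at the crossing or by matching the sign to the definiteness of the Goeritz forms of the alternating base case, identifies which of $n, n'$ is positive and which is negative, and I expect it to yield $n > 0$ and $n' < 0$. This gives $b_2^-(M_{C_B}) = 0$ and $b_2^+(M_{C_W}) = 0$, completing the proof.
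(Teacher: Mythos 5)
Your overall strategy is the same as the paper's: realize $M_{C_B}$ as a cobordism with $b_2=1$ glued to $M_{B_1}$ along the rational homology sphere $\Sigma(L_1)$, so that positive definiteness reduces to the sign of a single rational number, and then invoke the quasi-alternating determinant identity to pin down that sign. The paper carries out the last step with Goeritz matrices: $g_{B_\emptyset}$ is $g_{B_1}$ bordered by one extra row and column with corner entry $a$, while $g_{B_0}$ is the \emph{same} bordered matrix with corner entry $a-1$; diagonalizing over $\mathbb{Q}$ gives the signed expansion $\det(g_{B_\emptyset}) = a'\det(g_{B_1}) = \det(g_{B_0})+\det(g_{B_1})$, and the hypothesis $\det(L)=\det(L_0)+\det(L_1)$ then forces $\det(g_{B_0})$ and $\det(g_{B_1})$ to have the same sign, whence $a' = \det(g_{B_0})/\det(g_{B_1})+1>0$.

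The gap in your write-up sits exactly at the step you yourself call the crux. You propose to extract the sign of $n$ from ``the homology exact sequence of a $2$-handle attachment,'' which ``expresses $|H_1(\Sigma(L))|$ in terms of $|H_1(\Sigma(L_i))|$ and $|n|$.'' But that sequence only controls orders of finite groups, i.e.\ the absolute values $|n|$ and $\det(L_i)$; it cannot distinguish the quasi-alternating case $\det(L)=\det(L_0)+\det(L_1)$ from the case $\det(L)=|\det(L_0)-\det(L_1)|$, and that distinction is precisely what decides whether $n>0$ or $n<0$. The missing ingredient is a \emph{signed} three-term relation among the determinants of the forms of $L$, $L_0$ and $L_1$ taken with consistent conventions; in the paper this is the elementary observation that $g_{B_\emptyset}$ and $g_{B_0}$ differ by $1$ in a single diagonal entry, which yields $\det(g_{B_\emptyset})=\det(g_{B_0})+\det(g_{B_1})$ as an identity of signed integers. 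You note that the form of $B_\emptyset$ is a rank-one enlargement of that of $B_1$, but you never bring $L_0$'s Goeritz matrix into the same picture, and your concluding ``separate sign check'' is stated as an expectation rather than an argument, so the proof does not close. Two smaller points: since $\det(L_0),\det(L_1)\neq 0$, the manifolds $\Sigma(L_i)$ are rational homology spheres, so your case ``$[\gamma]$ non-torsion'' does not occur and $b_2(M_{C_B})$ is exactly $1$; and the paper's proof actually uses the decomposition $M_{B_\emptyset}=M_{B_1}\cup M_{C_B}$, i.e.\ $C_B$ is the cobordism attached along $L_1$ (and is capped off by a surface for $L_1$ in the proof of Theorem \ref{thm3}), so the endpoints in your set-up should be checked against that convention.
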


\def\proofname{Proof}
\begin{proof}
We first consider $M_{C_B}$.
For a given 4-manifold $M$, let $Q_M$ denote the intersection form of $M$.
Since $M_{B_{\emptyset}} = M_{B_1} \cup M_{C_B}$ and $\Sigma(L_1)$ is a rational homology 3-sphere,
the bilinear form $Q_{M_{B_{\emptyset}}}$ is isomorphic to 
$Q_{M_{B_1}} \oplus Q_{M_{C_B}}$ over $\Bbb{Q}$. 
Since $b_2(M_{C_B})= b_2(M_{B_{\emptyset}}) - b_2(M_{B_1}) =1$,
the 4-manifold $M_{C_B}$ is positive definite if and only if 
$\sigma(M_{C_B})=\sigma(M_{B_{\emptyset}}) - \sigma(M_{B_1}) =1$.

It is proved in \cite{gordon-litherland} that for any surface $F$ in $S^3$, 
its Goeritz form $G_F$ is isomorphic to $Q_{M_F}$, and so
we can prove $\sigma(M_{B_{\emptyset}}) - \sigma(M_{B_1}) =1$
by studying the Goeritz forms $G_{B_{\emptyset}}$ and  $G_{B_1}$.
Let $g_{B_1}$ be a representation matrix for $G_{B_1}$.
Note that $B_1$ is lying both in $B_{\emptyset}$ and in $B_0$, and there exist representation matrices
$g_{B_\emptyset}$ and $g_{B_0}$ for $G_{B_{\emptyset}}$ and $G_{B_0}$ such that
$$
g_{B_{\emptyset}}=
\left(
\begin{array}{cc}
a & \mathbf{b}\\
\mathbf{b}^{\tau} & g_{B_1}
\end{array}
\right)
\text{ and }
g_{B_0}=
\left(
\begin{array}{cc}
a-1 & \mathbf{b}\\
\mathbf{b}^{\tau} & g_{B_1}
\end{array}
\right)
$$
for some integer $a$ and row vector $\mathbf{b}$. 
Moreover, there exist $\Bbb{Q}$-coefficient square matrices $p$ and $q$
which satisfy
\begin{enumerate}
\item $\det p = \det q = 1$,
\item the product $p (g_{B_1}) p^{\tau}$ is a diagonal matrix, and
\item
$$
q(g_{B_{\emptyset}})q^{\tau}=
\left(
\begin{array}{cc}
a' & \mathbf{0}\\
\mathbf{0} & p(g_{B_1})p^{\tau}
\end{array}
\right)
\text{ and }
q(g_{B_0})q^{\tau}=
\left(
\begin{array}{cc}
a'-1 & \mathbf{0}\\
\mathbf{0} & p(g_{B_1})p^{\tau}
\end{array}
\right)
$$
for some rational number $a'$.
\end{enumerate}
This implies that $\sigma(M_{B_{\emptyset}}) - \sigma(M_{B_1}) =1$
if and only if $a'>0$.
We compare the determinant of $g_{B_{\emptyset}}$, $g_{B_0}$ and $g_{B_1}$
to prove $a'>0$.
It is known that for a 4-manifold $M$,
the determinant of any representation matrix for $Q_M$
is equal to the order of $H_1(\partial M ; \Bbb{Z})$.
Hence we see that
$$
\det(L_{i})= |\det(g_{B_i})|
$$
and
$$
\det(g_{B_{\emptyset}})
= a' \cdot \det(g_{B_1})
=\det(g_{B_0}) + \det(g_{B_1}).
$$
Since $\det(L)=\det(L_0)+ \det(L_1)$, 
the above equalities imply that $\det(g_{B_0})$ and $\det(g_{B_1})$ have the same sign,
and so $a' = \frac{\det(g_{B_0})}{\det(g_{B_1})}+1$ is positive.
Similarly, we can prove that $M_{C_W}$ is negative definite.
\end{proof}

\def\proofname{Proof of Theorem \ref{thm3}}

\begin{proof}
Here we prove Theorem \ref{thm3} for all quasi-alternating links;
name-ly, we prove that any quasi-alternating link bounds surfaces $P$ and $N$ in $B^4$
whose double branched cover $M_P$ and $M_N$ are positive definite and negative definite respectively. We prove this assertion by induction on $\det(L)$.

If $L$ is a quasi-alternating link with $\det(L)=1$, then $L$ is the unknot and obviously
4-dimensionally alternating. 
Suppose that $\det(L)=n>1$ and Theorem \ref{thm3} holds for any quasi-alternating 
link with determinant less than $n$.
Then there exists a pair of resolutions $L_0, L_1$ for $L$,
such that $L_0, L_1$ are quasi-alternating and $\det(L)= \det(L_0) + \det(L_1)$.
Since $\det(L_0), \det(L_1)<n$,
we can take spanning surfaces $N_0$ and $P_1$ for $L_0$ and $L_1$
in $B^4$ whose double branched cover are negative definite and positive definite respectively.
Therefore, it follows from Lemma \ref{lem qa} that $C_W \cup N_0$ (and $C_B \cup P_1$) is a spanning surface for $L$ in $B^4$
whose double branched cover is negative definite (resp.\ positive definite). 
This completes the proof.
\end{proof}


\section{Proof of Proposition \ref{prop1}}

In this section, we prove Proposition \ref{prop1}.
Note that any slice knot is obviously 4-dimensionally alternating, and
it follows from Theorem \ref{thm3} that any quasi-alternating knot is 4-dimensionally alternating.
Therefore, Proposition \ref{prop1} follows from the following proposition.
\begin{prop}
\label{prop4.1}
The knots $10_{139}$, $10_{152}$, $10_{154}$ and $10_{161}$ are 
4-dimensionally alternating but not concordant to any quasi-alternating knot.
For any knot $K$ with 10 or fewer crossings except for the above four knots,
if $K$ is neither quasi-alternating nor slice,
then $K$ is not 4-dimensionally alternating.
\end{prop}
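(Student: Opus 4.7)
The plan is to combine three ingredients: the existing classification of quasi-alternating and slice knots with at most $10$ crossings, the lower bound of Theorem \ref{thm2}, and a concordance obstruction to quasi-alternatingness drawn from knot Floer homology. I begin by using the tabulated lists of quasi-alternating knots (as in work of Greene and Champanerkar--Kofman) together with the list of slice knots in Rolfsen's table to extract the finite list $\mathcal{K}$ of knots with at most $10$ crossings that are neither quasi-alternating nor slice.

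For each $K \in \mathcal{K}$ other than $10_{139}, 10_{152}, 10_{154}, 10_{161}$, I would apply Theorem \ref{thm2} to $K$ and to its mirror, yielding
$$
b^+_*(K) \geq \frac{\sigma(K)}{2} - d(S^3_{-1}(K)), \qquad b^-_*(K) \geq -\frac{\sigma(K)}{2} + d(S^3_{+1}(K)).
$$
The signatures and Heegaard Floer correction terms involved are tabulated for knots of this size, so this becomes a finite verification: for each such $K$, at least one of the two right-hand sides is strictly positive, which rules out $K$ being $4$-dimensionally alternating.

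For each of the four exceptional knots two things must be established. To verify that $K$ is $4$-dimensionally alternating I would exhibit explicit surfaces $F^{\pm} \subset B^4$ with $\partial F^{\pm} = K$ such that $M_{F^+}$ is negative definite and $M_{F^-}$ is positive definite, giving $b^+_*(K) = b^-_*(K) = 0$; a practical route is a short sequence of band moves from a standard diagram of $K$ to a simpler knot that bounds an obviously definite checkerboard surface, with the definiteness checked by direct computation of the Goeritz form. To rule out concordance to any quasi-alternating knot I would invoke a concordance invariant vanishing on the quasi-alternating class, for instance $\tau(K) + \sigma(K)/2$, where $\tau$ is the Ozsv\'ath--Szab\'o concordance invariant: this combination vanishes on every quasi-alternating knot as a consequence of their Floer-theoretic thinness (Manolescu--Ozsv\'ath), and for each of the four exceptional knots it is nonzero from the tables. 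The principal obstacle is the first half of this step, since concordance to an alternating or quasi-alternating knot is precisely what the second half precludes; the surfaces $F^{\pm}$ must therefore be produced by genuinely $4$-dimensional band moves, and their definiteness verified by case-by-case Goeritz computations for each of $10_{139}, 10_{152}, 10_{154}, 10_{161}$.
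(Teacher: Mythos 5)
Your overall architecture (finite list from the Champanerkar--Kofman classification; Theorem \ref{thm2} applied to $K$ and $K^*$ to kill the non-exceptional knots; explicit definite surfaces plus the obstruction $\tau(K)+\sigma(K)/2\neq 0$ for the four exceptional knots) matches the paper for most of the cases, and your treatment of $10_{139}$, $10_{152}$, $10_{154}$, $10_{161}$ is essentially the paper's Lemma \ref{lem5}. But there is a genuine gap in the middle step: the claim that for every non-exceptional, non-quasi-alternating, non-slice knot with at most $10$ crossings, at least one of $\frac{\sigma(K)}{2}-d(S^3_{-1}(K))$ and $-\frac{\sigma(K)}{2}+d(S^3_{+1}(K))$ is strictly positive is false. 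The relevant list consists of seven knots: $8_{19}$, $9_{42}$, $10_{124}$, $10_{128}$, $10_{132}$, $10_{136}$, $10_{145}$. The Batson-type bound succeeds only for the first, second, fourth and fifth of these (the paper's Lemma \ref{lem6}). For $10_{124}=T_{3,5}$ one computes $\sigma(T^*_{3,5})/2 - d(S^3_{-1}(T^*_{3,5})) = 4 - 2V_0(T_{3,5}) = 4-4 = 0$, while the bound for $T_{3,5}$ itself is $-4$; so both right-hand sides are non-positive and your finite verification returns nothing for this knot. The same failure occurs for $10_{132}$ and $10_{145}$.

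The paper closes this gap with a different obstruction (Lemma \ref{lem7}): using Akbulut's algorithm on explicit ribbon surfaces it identifies $\Sigma(10_{124})=-S^3_{1}(3_1)$, $\Sigma(10_{132})=-S^3_{5/2}(3_1)$ and $\Sigma(10_{145})=-S^3_{3}(5_2)$, and then invokes the Owens--Strle theorem that $S^3_r(3_1)$ bounds a negative definite $4$-manifold only when $r\geq 4$ (with an auxiliary negative definite cobordism from $S^3_3(5_2)$ to $S^3_3(3_1)$ for the last case). Since $b^-_*(K)=0$ would force $\Sigma(K)$ to bound a positive definite $4$-manifold, these three knots are not $4$-dimensionally alternating. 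You need some such argument --- your proposal as written has no mechanism to handle these three knots, and this is precisely where the content of the classification lies beyond the Batson bound. A secondary, smaller issue: the correction terms $d(S^3_{\pm 1}(K))$ are not simply read off from standard tables; the paper has to compute $V_0$ for $10_{128}$ and $10_{136}$ by exhibiting degree-$n$ disks in punctured $\overline{\mathbb{C}P^2}$, so "tabulated" understates the work even in the cases where your bound does succeed.
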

Moreover, it is described in \cite{champanerkar-kofman}
that a knot $K$ with 10 or fewer crossings is not quasi-alternating
if and only if $K$ is one of the following 14 knots
$$
\begin{array}{c}
8_{19}, 9_{42}, 9_{46}, 10_{124}, 10_{128}, 10_{132}, 10_{136},\\ 
10_{139}, 10_{140}, 10_{145}, 10_{152}, 10_{153}, 10_{154}, \text{ and } 10_{161},
\end{array}
$$
where $9_{46}$, $10_{153}$ and $10_{140}$ are slice knots.
Taking these facts into consideration, we decompose Proposition \ref{prop4.1}
into the following three lemmas.
\begin{lem}
\label{lem6}
If $K$ is one of  $8^*_{19}$, $9^*_{42}$, $10^*_{128}$ and $10^*_{136}$,
then $\frac{\sigma(K)}{2}-d(S^3_{-1}(K))=1$,
and hence $K$ is not 4-dimensionally alternating.
\end{lem}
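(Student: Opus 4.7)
The key reduction is immediate from Theorem~\ref{thm2}: if $\sigma(K)/2 - d(S^3_{-1}(K)) = 1$, then $b^+_*(K) \geq 1 > 0$, so $K$ fails the definition of 4-dimensional alternatingness. Thus the lemma reduces to the purely numerical task of computing $\sigma(K)$ and $d(S^3_{-1}(K))$ for each of the four knots $8^*_{19}$, $9^*_{42}$, $10^*_{128}$, $10^*_{136}$.

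The first step is to read the signatures off a standard knot table (e.g.\ KnotInfo); since $\sigma(K^*) = -\sigma(K)$, the signatures of the mirrors are the negatives of those of $8_{19}$, $9_{42}$, $10_{128}$, $10_{136}$. This step is routine and requires no real work beyond looking up four numbers. The substantive step is the computation of $d(S^3_{-1}(K))$. Here I would use the symmetry $d(S^3_{-1}(K)) = -d(S^3_{+1}(K^*))$ together with the Ni--Wu formula $d(S^3_{+1}(J)) = -2V_0(J)$, reducing the problem to computing the Heegaard Floer invariant $V_0$ of each of the original (unmirrored) knots $8_{19}$, $9_{42}$, $10_{128}$, $10_{136}$. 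For these knots the knot Floer complexes are small and either tabulated in the literature or easily produced by existing software, so $V_0$ is accessible.

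Once both pieces of data are in hand for each knot, I would simply verify $\sigma(K)/2 - d(S^3_{-1}(K)) = 1$ arithmetically, presented as a small table; no further structural argument is needed. The main obstacle I anticipate is purely computational bookkeeping: keeping track of the mirror conventions (since the lemma is phrased in terms of the mirrors $K^*$ but Theorem~\ref{thm2} and the usual Ni--Wu formula are often stated with the opposite orientation convention), and making sure that the value of $V_0$ used corresponds to the correct chirality. Provided this is done carefully, the four equalities will fall out of direct computation, and the conclusion that $b^+_*(K) \geq 1$ — and hence that $K$ is not 4-dimensionally alternating — is immediate.
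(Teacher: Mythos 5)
Your reduction is exactly the paper's: invoke Theorem~\ref{thm2} to get $b^+_*(K)\geq \sigma(K)/2-d(S^3_{-1}(K))$, read off the signatures, and convert $d(S^3_{-1}(K))=-d(S^3_{1}(K^*))=2V_0(K^*)$ via Ni--Wu, so that everything comes down to $V_0$ of the unmirrored knots. The mirror bookkeeping you worry about is handled correctly. The problem is that you then declare the remaining step --- actually determining $V_0(8_{19})$, $V_0(9_{42})$, $V_0(10_{128})$, $V_0(10_{136})$ --- to be routine table-lookup, and this is precisely the part of the lemma that carries all the content. $V_0$ is not determined by the knot Floer homology groups; it requires the full bifiltered chain homotopy type of $CFK^\infty$, and for non-thin, non-L-space knots such as $10_{128}$ and $10_{136}$ this is not something one simply reads off KnotInfo. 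As written, your proof produces no values and cites no source from which they could be extracted, so the four equalities are asserted rather than verified.

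The paper fills this gap geometrically: for $8^*_{19}$ and $9^*_{42}$ it cites Batson and Sato--Tange, and for $10_{128}$ and $10_{136}$ it exhibits explicit slice disks in punctured $\overline{\mathbb{C}P^2}$ representing $n\gamma$ with $n=3$ and $n=1$ respectively, then applies Proposition~\ref{cp2} (a consequence of a full-twist inequality for $\nu^+$) to conclude $V_0(10_{128})=1$ and $V_0(10_{136})=0$ exactly. Your approach could in principle be completed by a genuine $CFK^\infty$ computation (e.g.\ $8_{19}=T_{3,4}$ is an L-space knot, so $V_0=1$ is immediate there), but to be a proof you must either supply those computations or cite specific results giving the four values of $V_0$; without that, the lemma is not established.
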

\begin{lem}
\label{lem7}
If $K$ is one of $10_{124}$, $10_{132}$ and $10_{145}$,
then $\Sigma(K)$ cannot bound any positive definite 4-manifold,
and hence $K$ is not 4-dimensionally alternating.
\end{lem}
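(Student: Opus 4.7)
The plan is to prove that for each of the three knots, $\Sigma(K)$ does not bound any positive definite $4$-manifold; the non-$4$-dimensional-alternatingness then follows formally. Indeed, if $K$ were $4$-dimensionally alternating then $b^{-}_{*}(K)=0$ would produce a surface $F\subset B^{4}$ with $\partial F=K$ and $b_{2}^{-}(M_F)=0$. Because $K$ is a knot, $\Sigma(K)=\partial M_F$ is a rational homology $3$-sphere, so the intersection form on $H_2(M_F;\mathbb{Q})$ is non-degenerate; combined with $b_{2}^{-}=0$ this forces $M_F$ to be positive definite, contradicting the main claim.

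For $10_{124}=T_{3,5}$ the argument is classical. The double branched cover is the Poincar\'e homology sphere $\Sigma(2,3,5)$, oriented as the boundary of the negative definite $E_8$-plumbing $W$. If $\Sigma(2,3,5)$ bounded a positive definite $4$-manifold $X$, then the closed $4$-manifold $X\cup_{\Sigma(2,3,5)}(-W)$ would be smooth and positive definite with intersection form $Q_X\oplus E_8$, contradicting Donaldson's diagonalization theorem since $E_8$, having no norm-$1$ vectors, cannot embed as an orthogonal summand of any standard diagonal lattice.

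For $10_{132}$ and $10_{145}$ I would run a parallel argument via Heegaard Floer correction terms. First, present $\Sigma(K)$ as the boundary of a convenient negative definite plumbing, obtained from the Goeritz matrix of a diagram of $K$ or from a surgery description, and compute the $d$-invariants $d(\Sigma(K),\mathfrak{s})$ by the Ozsv\'ath--Szab\'o plumbing algorithm. Then apply the reverse-orientation form of the correction-term inequality, which says that for any positive definite filling $X$ of $\Sigma(K)$ with $b_1(X)=0$ and any $\mathrm{Spin}^c$ structure $\mathfrak{t}$ on $X$,
$$
4\,d\bigl(\Sigma(K),\mathfrak{t}|_{\Sigma(K)}\bigr)\;\le\;c_1(\mathfrak{t})^2-b_2(X).
$$
The obstruction is concluded by exhibiting a $\mathrm{Spin}^c$ structure $\mathfrak{s}$ on $\Sigma(K)$ whose $d$-invariant cannot satisfy this bound for any admissible pair $(Q_X,\mathfrak{t})$---equivalently, for any positive definite integer lattice of discriminant $\det(K)$ and any characteristic vector in the relevant coset.

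The main obstacle is the hands-on computation for these two non-torus knots: identifying a small plumbing description of $\Sigma(K)$, enumerating the $\mathrm{Spin}^c$ structures, computing the correction terms, and verifying that the numerical obstruction is effective. In contrast, $10_{124}$ falls to Donaldson immediately via the $E_8$ plumbing.
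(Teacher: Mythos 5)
Your reduction of the lemma to the definiteness statement is fine, and your treatment of $10_{124}$ is correct and complete: $\Sigma(10_{124})$ is indeed the Poincar\'e sphere oriented as the boundary of the negative definite $E_8$-plumbing (consistent with $\sigma(10_{124})=-8$), and the Donaldson gluing argument rules out a positive definite filling. This is a more classical route than the paper's, which instead identifies $\Sigma(10_{124})=-S^3_{1}(3_1)$ via Akbulut's ribbon-surface algorithm and quotes Owens--Strle's theorem that $S^3_{r}(3_1)$ bounds a negative definite $4$-manifold if and only if $r\ge 4$.

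For $10_{132}$ and $10_{145}$, however, what you have written is a plan rather than a proof, and the gap sits exactly where the content of the lemma lies. You never identify $\Sigma(10_{132})$ or $\Sigma(10_{145})$ (the paper obtains $\Sigma(10_{132})=-S^3_{5/2}(3_1)$ and $\Sigma(10_{145})=-S^3_{3}(5_2)$ from explicit ribbon surfaces), you compute no correction terms, and you do not verify that the proposed obstruction is effective. Moreover, the obstruction as you state it --- exhibit one $\mathrm{Spin}^c$ structure $\mathfrak{s}$ with $4d(\Sigma(K),\mathfrak{s})>c_1(\mathfrak{t})^2-b_2(X)$ for every positive definite filling $X$ and every $\mathfrak{t}$ restricting to $\mathfrak{s}$ --- requires bounding $\max_{\mathfrak{t}}\bigl(c_1(\mathfrak{t})^2-b_2(X)\bigr)$ over positive definite lattices of \emph{arbitrary} rank with the given discriminant; since the standard diagonal lattice already achieves $c_1^2-b_2=0$ in the trivial coset, a single-$\mathfrak{s}$ test is not obviously sufficient, and this lattice-theoretic step is precisely the hard part of Owens--Strle's work (which uses sums of correction terms over all $\mathrm{Spin}^c$ structures and Elkies-type results). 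The paper sidesteps all of this by reducing both cases to the trefoil: $S^3_{5/2}(3_1)$ is excluded directly by Owens--Strle since $5/2<4$, and for $10_{145}$ a hypothetical positive definite filling of $-S^3_{3}(5_2)$ is composed with an explicit negative definite cobordism to $S^3_{3}(3_1)$, again contradicting Owens--Strle. Until you carry out the identification of the covers and the numerical verification, your argument for these two knots is incomplete.
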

\begin{lem}
\label{lem5}
The knots $10_{139}$, $10_{152}$, $10_{154}$ and $10_{161}$ are 4-dimensionally
alternating but not concordant to any quasi-alternating knot.
\end{lem}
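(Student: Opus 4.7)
The lemma has two essentially independent claims---that each of the four knots is 4-dimensionally alternating, and that none of them is concordant to a quasi-alternating knot---and the plan is to handle them separately.

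For the concordance obstruction, the plan is to invoke the Ozsv\'ath--Szab\'o tau invariant. By the thinness result of Manolescu--Ozsv\'ath, every quasi-alternating knot $J$ satisfies $\tau(J)=-\sigma(J)/2$, and since both $\tau$ and $\sigma$ are concordance invariants this identity is inherited by any knot concordant to $J$. A look-up in KnotInfo shows that each of $10_{139}$, $10_{152}$, $10_{154}$, $10_{161}$ violates $\tau(K)=-\sigma(K)/2$---for instance $\sigma(10_{139})=-4$ while $\tau(10_{139})=4$---so none of them can be concordant to any quasi-alternating knot. The Rasmussen $s$-invariant or the $\Upsilon$-function would serve equally well here.

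For the 4-dimensional alternation, the plan is to exhibit, for each knot $K$ in the list, surfaces $P_K, N_K \subset B^4$ with $\partial P_K = \partial N_K = K$ such that $M_{P_K}$ is positive definite and $M_{N_K}$ is negative definite. The strategy I would try first is to find, for each knot, a single band attachment producing a quasi-alternating $2$-component link $L$. The band gives a cobordism $C \subset S^3 \times [0,1]$ from $L$ to $K$ with $b_1(C)=2$, and the link version of Theorem \ref{thm3} proved above supplies positive- and negative-definite spanning surfaces $P_L, N_L \subset B^4$ of $L$. Gluing yields $C \cup P_L$ and $C \cup N_L$ as surfaces in $B^4$ with boundary $K$; since $\Sigma(L)$ is a rational homology sphere, the intersection form of the double branched cover splits over $\mathbb{Q}$ along $\Sigma(L)$, so as long as $M_C$ itself has the correct definiteness the glued cover is definite. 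The sign can then be verified by a Goeritz-form computation in the spirit of Lemma \ref{lem qa}: a single band changes exactly one diagonal entry of the Goeritz matrix, and its sign dictates whether the added eigenvalue is positive or negative.

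The hard part will be this case-by-case geometric step: for each of the four knots one must locate a suitable band in a specific diagram and verify the sign of the new eigenvalue, and there is no reason to expect a uniform construction across the list. The concordance obstruction, by contrast, reduces to a short table look-up once one invokes the Manolescu--Ozsv\'ath formula for $\tau$.
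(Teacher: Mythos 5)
Your argument for the non-concordance half coincides with the paper's: both invoke the Manolescu--Ozsv\'ath thinness of quasi-alternating knots to get $\tau(J)=-\sigma(J)/2$, note that $\tau$ and $\sigma$ are concordance invariants, and check via KnotInfo that the four knots violate the identity. That part is fine.

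The half that actually carries the geometric content of the lemma --- producing, for each knot, surfaces in $B^4$ with positive definite and negative definite double branched covers --- is where your proposal has a genuine gap, and in fact a structural flaw. You propose a \emph{single} band from $K$ to a quasi-alternating $2$-component link $L$ and then glue the resulting cobordism $C$ to both $P_L$ and $N_L$. But if $\det(L)\neq 0$ then $\Sigma(L)$ is a rational homology sphere, and the double branched cover $M_C$ of a single band move is $\Sigma(L)\times[0,1]$ with one $2$-handle attached along a rationally null-homologous curve, so $b_2(M_C)=1$ and $M_C$ contributes one nonzero eigenvalue of a fixed sign. Consequently at most one of $M_{C\cup P_L}$, $M_{C\cup N_L}$ can be definite of the desired sign; the same band cannot serve both purposes. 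You need two separate constructions per knot, one for each sign, together with a verification of the sign of the eigenvalue contributed by each --- and your proposal supplies none of the required bands or diagrams for the four specific knots, which is precisely the case-by-case work the lemma demands. The paper handles this by giving, for each knot, an explicit ribbon surface whose double branched cover is negative definite (yielding $b^+_*(K)=0$), and separately an explicit \emph{positive} $H(2)$-move from $3_1^*$ (a band move between knots whose double-cover cobordism has $\sigma(M_C)=+1$, detected by the writhe/signature formula $\sigma(M_C)=(\sigma(K_2)-\sigma(K_1))+\frac{1}{2}(w(D_2)-w(D_1))$), glued to a positive definite spanning surface for $3_1^*$ (yielding $b^-_*(K)=0$). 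Your gluing-and-splitting mechanism over the rational homology sphere is the right tool, but as stated the plan cannot close, and the explicit geometric input is missing.
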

We first prove Lemma \ref{lem6}.

\def\proofname{Proof of Lemma \ref{lem6}}
\begin{proof}
Lemma \ref{lem6} in the case of $8^*_{19}$ and $9^*_{42}$ are proved in \cite{batson}
and \cite{sato-tange} respectively.
We prove the lemma for $10^*_{128}$ and $10^*_{136}$.
It is easy to check that $\sigma(10^*_{128})=6$ and $\sigma(10^*_{136})=2$.
To compute $d(S^3_{-1}(K))$,
we use Ni-Wu's $V_k$-sequence \cite{ni-wu}.
Here $V_k$ is a $\Bbb{Z}_{\geq0}$-valued concordance invariant for each $k \in \Bbb{Z}_{\geq0}$. In particular, it follows from \cite[Proposition 1.6]{ni-wu} that
$$
d(S^3_{-1}(K))= -d(S^3_1(K^*)) = 2V_0(K^*).
$$
We compute $V_0$ of $10_{128}$ and $10_{136}$ by using the following proposition,
which immediately follows from \text{\cite[Proposition 1.9]{sato}}. 
Here we denote $\overline{\Bbb{C}P^2}$ with open 4-ball deleted by 
$\punc \overline{\Bbb{C}P^2}$.
\begin{prop}
\label{cp2}
Suppose that a knot $K$ bounds a disk $D$ in $\punc \overline{\mathbb{C}P^2}$
such that 
$[D,\partial D] = n \gamma \in H_2(\punc \overline{\mathbb{C}P^2}, \partial (\punc \overline{\mathbb{C}P^2}); \mathbb{Z})$ for a generator $\gamma$ and some odd 
integer $n>0$. Then we have
$$
V_0(K)= \frac{(n-1)(n+1)}{8}.
$$
\end{prop}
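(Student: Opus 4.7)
The proposition is stated to follow immediately from \cite[Proposition 1.9]{sato}, so the plan is essentially to verify that the hypotheses of the cited result are met and to read off the numerical conclusion. The only geometric input required from the hypothesis is the homological self-intersection of $D$. Since the generator $\gamma$ of $H_2(\overline{\mathbb{C}P^2};\mathbb{Z})$ satisfies $\gamma \cdot \gamma = -1$ and the relative class $[D,\partial D] = n\gamma$ pairs with itself through the intersection form of $\overline{\mathbb{C}P^2}$, one obtains $D \cdot D = -n^2$. Thus $K$ is realized as the boundary of a smoothly embedded disk of normal Euler number $-n^2$ in $\punc\overline{\mathbb{C}P^2}$, and the remaining content is an invocation of the cited formula together with the observation that $n$ odd forces $(n-1)(n+1)/8$ to be a non-negative integer.

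For the proposal I would sketch how \cite[Proposition 1.9]{sato} is proved in this setup, using standard $d$-invariant machinery. Removing a tubular neighborhood of $D$ from $\punc\overline{\mathbb{C}P^2}$ produces a $4$-manifold $W$ whose boundary is obtained by capping off the neighborhood of $K\subset S^3$ with a solid torus along the framing prescribed by $D$; that framing equals $D\cdot D = -n^2$, so $\partial W \cong S^3_{-n^2}(K)$. The intersection form of $W$ inherits negative definiteness from $\overline{\mathbb{C}P^2}$ (only the single class $\gamma$ survives, with square $-1$), so $W$ is a negative definite filling of $S^3_{-n^2}(K)$ of rank one. Applying the Ozsv\'ath--Szab\'o $d$-invariant inequality to each $\mathrm{Spin}^c$ structure on $W$ and optimizing the characteristic class $c_1(\mathfrak{s})^2$, one obtains a lower bound on $d(S^3_{-n^2}(K),\mathfrak{s}_0)$ matching $d(L(n^2,1),\mathfrak{s}_0)$; by the Ni--Wu formula \cite{ni-wu} the difference is $-2V_0(K^*)$ (with signs suitably tracked), and direct evaluation yields $V_0(K) \le (n-1)(n+1)/8$. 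The parity hypothesis on $n$ enters precisely here: only for odd $n$ does $n^2-1$ clear the factor of $8$ demanded by the Lens space $d$-invariant formula, which is why the statement restricts to odd $n$.

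The step I expect to be the main obstacle is isolating the exact normalization of \cite[Proposition 1.9]{sato} that produces equality rather than merely the upper bound sketched above. The $d$-invariant inequality for a negative definite filling is one-sided, so a separate argument is needed to close the gap; the most plausible source of the matching lower bound is that the upper bound sketched above is already sharp in view of the $\nu^+$-equivalence class determined by the disk (so that in fact $V_0$ is a $\nu^+$-equivalence invariant realized by this construction), or alternatively that the statement as used in the subsequent Lemma \ref{lem6} only requires the upper bound direction. In either reading, the computation reduces to the two mechanical ingredients already identified: the self-intersection $D \cdot D = -n^2$ and the Ni--Wu identification of the $d$-invariant of $(-n^2)$-surgery in terms of $V_0$.
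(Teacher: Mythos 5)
The paper records no argument for this proposition beyond the citation --- it is quoted as an immediate consequence of \cite[Proposition 1.9]{sato} --- so your decision to reconstruct the underlying proof goes beyond what the paper does, and your overall plan (pass to the complement of $D$, identify the boundary as a surgery on $K$, and invoke Ni--Wu) is the right one. However, there is a concrete error in the middle of your sketch. The class $\gamma$ does \emph{not} survive into $H_2(W)$ for $W = \punc \overline{\mathbb{C}P^2} \setminus \nu(D)$: any absolute second homology class of $W$ must have zero algebraic intersection with $[D]=n\gamma$, whereas the generator of $H_2(\punc\overline{\mathbb{C}P^2})$ pairs with $n\gamma$ to give $\pm n \neq 0$. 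The long exact sequence of the pair (with $H_2(X,W)\cong\mathbb{Z}$ by excision and the Thom isomorphism, and $H_2(X)\to H_2(X,W)$ given by multiplication by $\pm n$) shows that $b_2(W)=0$ and $H_1(W)\cong\mathbb{Z}/n$. So $W$ is a rational homology ball bounded by $S^3_{-n^2}(K)$, not a rank-one negative definite filling.

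This correction dissolves the difficulty you flag in your final paragraph. For a rational homology ball the Ozsv\'ath--Szab\'o inequality applies to both $W$ and its orientation reversal, forcing $d(S^3_{-n^2}(K),\mathfrak{s})=0$ for every $\mathrm{Spin}^c$ structure $\mathfrak{s}$ that extends over $W$; one therefore gets an equality for $V_0$ directly, with no separate matching bound required. The hypothesis that $n$ is odd is used to guarantee that the central $\mathrm{Spin}^c$ structure --- the one whose $d$-invariant the Ni--Wu formula expresses through $V_0$ --- lies among the $n$ (out of $n^2$) structures on $S^3_{-n^2}(K)$ that extend over $W$; the divisibility of $(n-1)(n+1)$ by $8$ is a symptom of this, not the mechanism. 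Two smaller points: track the mirror/orientation conventions carefully, since a sign slip in the Ni--Wu formula produces $V_0(K^*)$ rather than $V_0(K)$; and your fallback observation is correct that the application in Lemma \ref{lem6} only needs the upper bound $V_0 \leq (n-1)(n+1)/8$, but the proposition as stated asserts equality, which the rational homology ball argument delivers.
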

As shown in Figure \ref{disk128} and \ref{disk136}, we see that 
$10_{128}$ and $10_{136}$ bounds a disk in $\punc \overline{\Bbb{C}P^2}$
with $n=3$ and $n=1$. By Proposition \ref{cp2}, we have
$V_0(10_{128})=1$ and $V_0(10_{136})=0$. This completes the proof.
\end{proof}

\begin{figure}[htbp]
\includegraphics{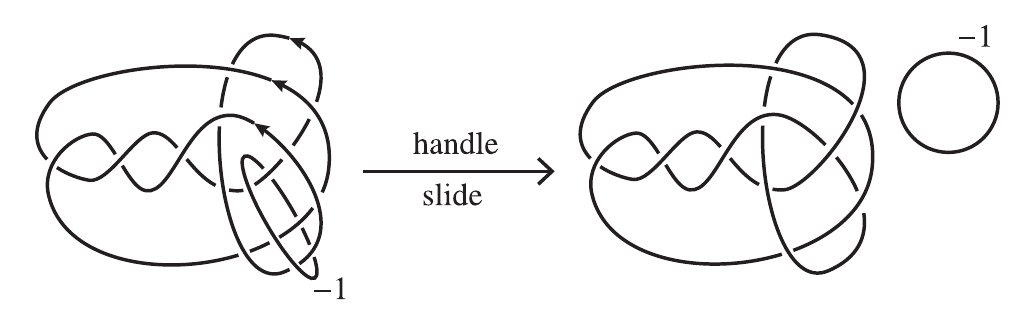}
\caption{\label{disk128} $10_{128}$ bounds a disk in $\punc \overline{\Bbb{C}P^2}$ with $n=3$}
\includegraphics{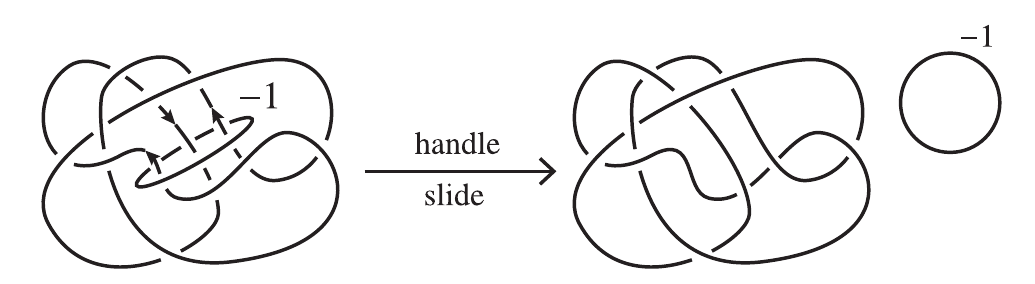}
\caption{\label{disk136} 
$10_{136}$ bounds a disk in $\punc \overline{\Bbb{C}P^2}$ with $n=1$}
\end{figure}

In order to prove Lemma \ref{lem7} and Lemma \ref{lem5},
we use Akbulut's method in \cite{akbulut} for describing a handle diagram for 
the double branched cover of $B^4$ over any ribbon surface.   

\def\proofname{Proof of Lemma \ref{lem7}}
\begin{proof}
We can verify that the boundary of the ribbon surfaces in Figure \ref{124},
Figure \ref{132} and Figure \ref{145} are
$10_{124}$, $10_{132}$ and $10_{145}$ respectively.
By applying Akbulut's method to these ribbon surfaces, we see that 
$\Sigma(10_{124})= S^3_{-1}(3_1^*)=-S^3_1(3_1)$,
$\Sigma(10_{132})= S^3_{-5/2}(3_1^*)=-S^3_{5/2}(3_1)$
and
$\Sigma(10_{145})= S^3_{-3}(5_2^*)=-S^3_{3}(5_2)$.
It is proved in \cite{owens-strle} that for  any $r \in \Bbb{Q}_{>0}$,
the manifold $S^3_{r}(3_1)$ bounds a negative definite definite 4-manifold
if and only if  $r \geq 4$. Hence neither $\Sigma(10_{124})$ nor
$\Sigma(10_{132})$ can bound a positive definite 4-manifold.

Assume that $\Sigma(10_{145})=-S^3_{3}(5_2)$ bounds a positive definite 4-manifold $M$,
and let $C$ be the cobordism represented by the relative handle diagram in Figure \ref{5_2}.
Since $C$ is negative definite and $\partial C = -S^3_{3}(5_2) \amalg S^3_{3}(3_1)$,
the manifold $-M \cup C$ is a negative definite and has boundary $S^3_{3}(3_1)$. 
This contradicts the above result of \cite{owens-strle}, and hence $\Sigma(10_{145})$ cannot bound
any positive definite 4-manifold. 
\end{proof}

\begin{figure}[htbp]
\hspace{7mm}
\includegraphics{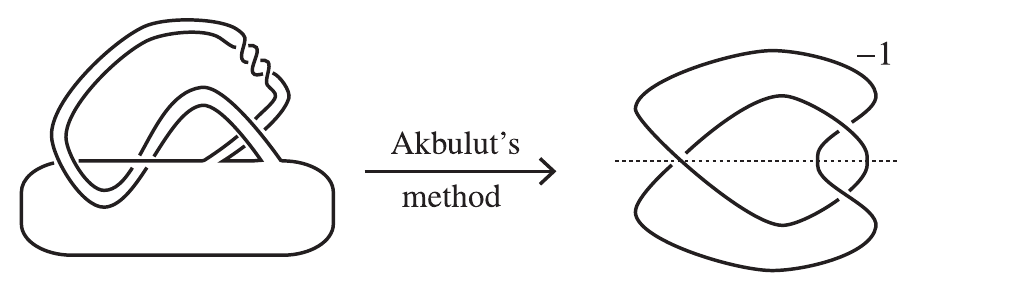}
\caption{\label{124} a ribbon surface with boundary $10_{124}$}
\hspace{7mm}
\includegraphics{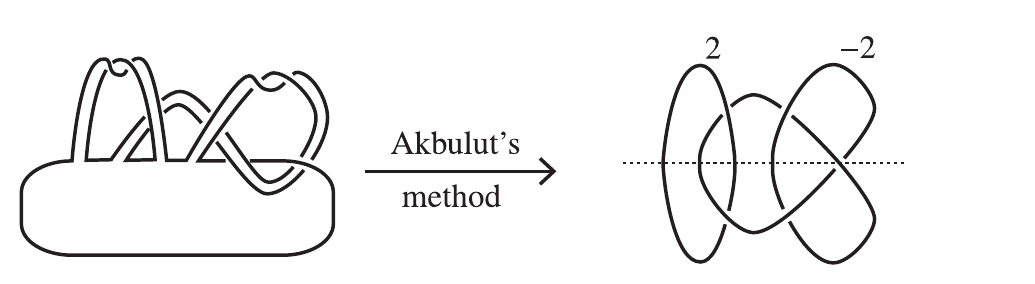}
\caption{\label{132} a ribbon surface with boundary $10_{132}$}
\hspace{7mm}
\includegraphics{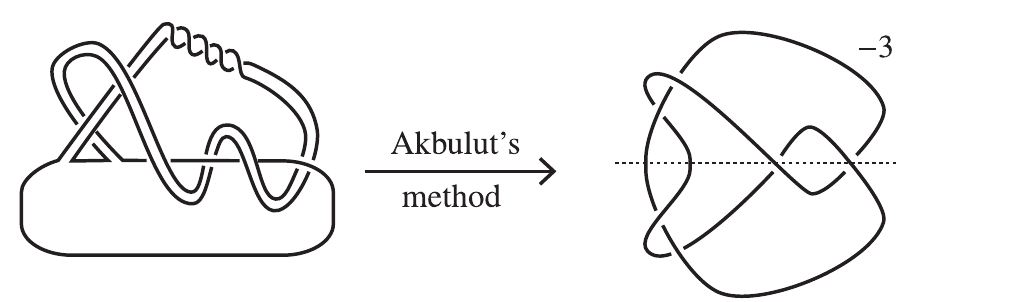}
\caption{\label{145} a ribbon surface with boundary $10_{145}$}
\includegraphics{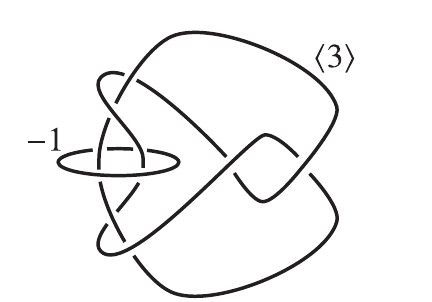}
\caption{\label{5_2} a negative definite cobordism from $S^3_{3}(5_1)$ to $S^3_{3}(3_1)$}
\end{figure}

Finally, we prove Lemma \ref{lem5}.
To prove Lemma \ref{lem5}, we use {\it $H(2)$-move},
which is a deformation of link diagram shown in Figure \ref{H(2)}.
If a diagram $D_1$ for a knot $K_1$ is deformed into a diagram $D_2$ for a knot $K_2$ by an $H(2)$-move,
then we can naturally obtain a non-orientable cobordism $C$ from $K_1$ to $K_2$ in $S^3 \times [0,1]$
such that $b_2(M_C)=1$. Moreover, 
if we denote the writhe of a knot diagram $D$ by $w(D)$, 
then the equality $e(C)= - (w(D_2)-w(D_1))$ holds, and hence 
Gordon-Literland's formula induces the equality
$$
\sigma(M_C)= (\sigma(K_2) -\sigma(K_1)) + \frac{1}{2}(w(D_2) -w(D_1)).
$$
We say that an $H(2)$-move is {\it positive} ({\it negative})
if $\sigma(M_C)=1$ (resp.\ $\sigma(M_C)=-1$).

\begin{figure}[htbp]
\includegraphics{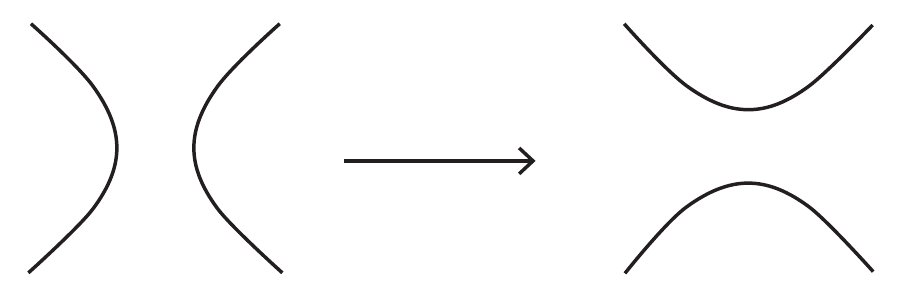}
\caption{\label{H(2)} $H(2)$-move}
\end{figure}

\def\proofname{Proof of Lemma \ref{lem5}}
\begin{proof}
Let $\tau$ be Ozsv\'ath-Szab\'o's $\tau$-invariant \cite{tau}.
It follows in \cite{knot info} that the knots $10_{139}$, $10_{152}$, $10_{154}$ and $10_{161}$
do not satisfy $\tau(K)=- \frac{\sigma(K)}{2}$, and hence they are not concordant to
any quasi-alternating knot.
We prove that these knots satisfy $b^{\pm}_*(K)=0$.
Here we consider the case of $10_{139}$.
It immediately follows from the lower diagrams in Figure \ref{139}
that $b^+_*(10_{139})=0$. Moreover, the knot $10_{139}$ is obtained from $3_1^*$
by the positive $H(2)$-move shown in the upper diagrams of Figure \ref{139}.
Since $3_1^*$ bounds a surface in $B^4$ whose double branched cover is positive definite,
we have $b^-_*(10_{139})=0$.  
Similarly, Figure \ref{152}, Figure \ref{154} and Figure \ref{161} shows
that $10_{152}$, $10_{154}$ and $10_{161}$ satisfy $b^{\pm}_*(K)=0$.
\end{proof}

\begin{figure}[htbp]
\includegraphics{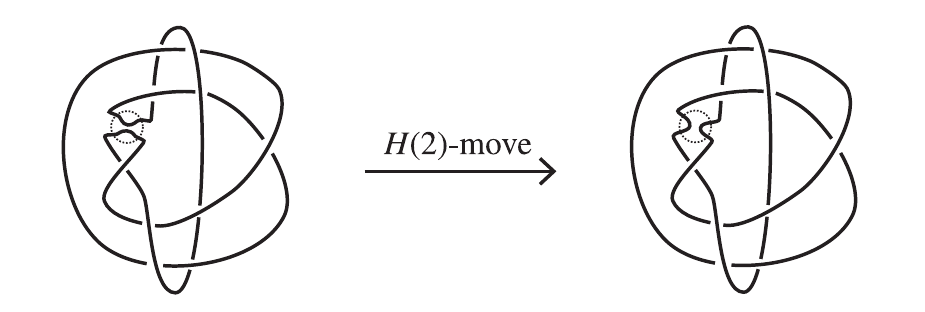}
\includegraphics{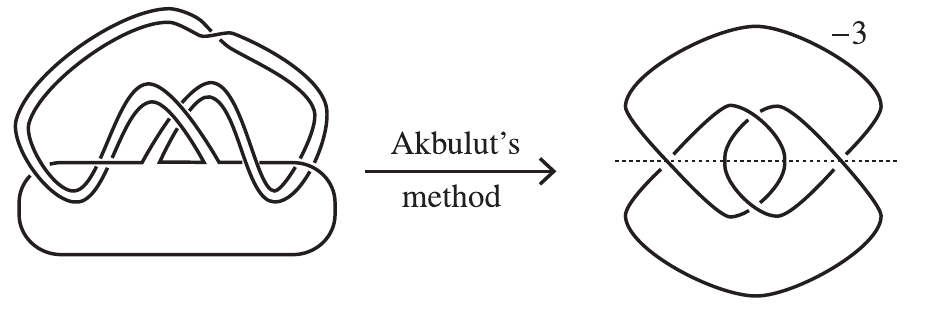}
\caption{\label{139} ribbon surfaces with boundary $10_{139}$}
\end{figure}

\begin{figure}[htbp]
\includegraphics{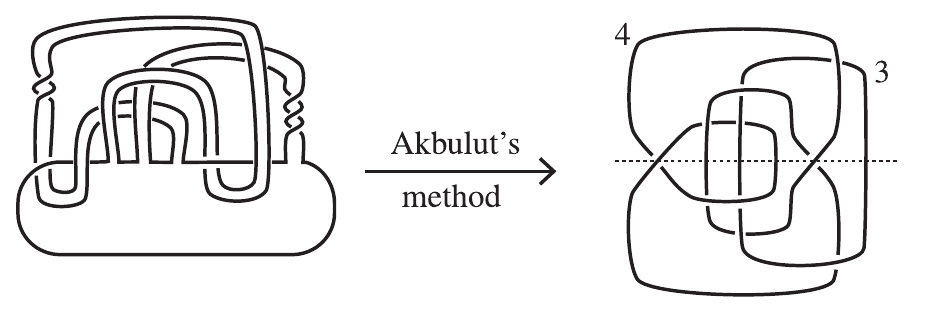}
\includegraphics{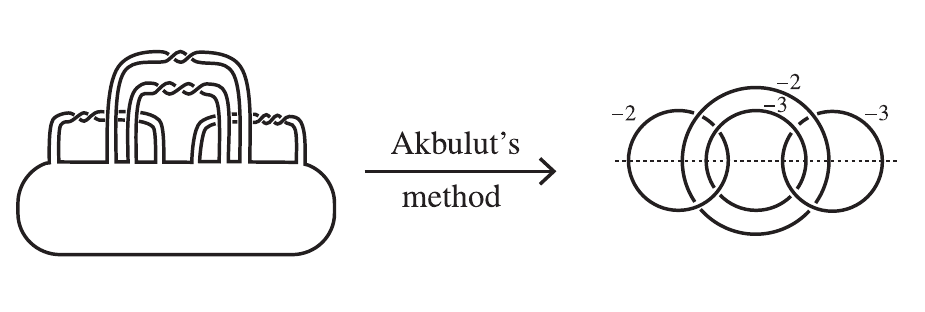}
\caption{\label{152} ribbon surfaces with boundary $10_{152}$}
\end{figure}

\begin{figure}[htbp]
\includegraphics{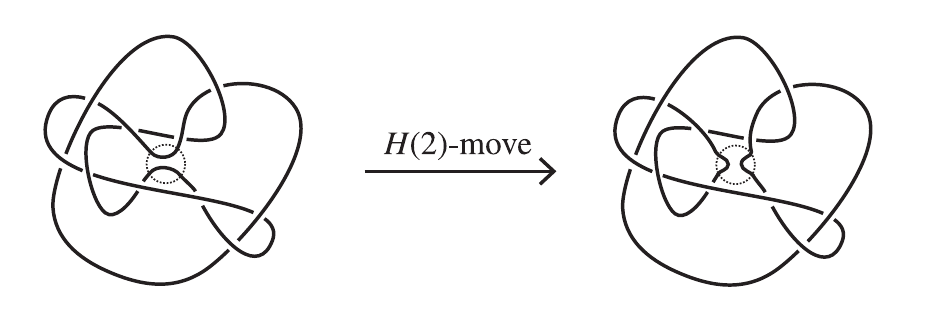}
\includegraphics{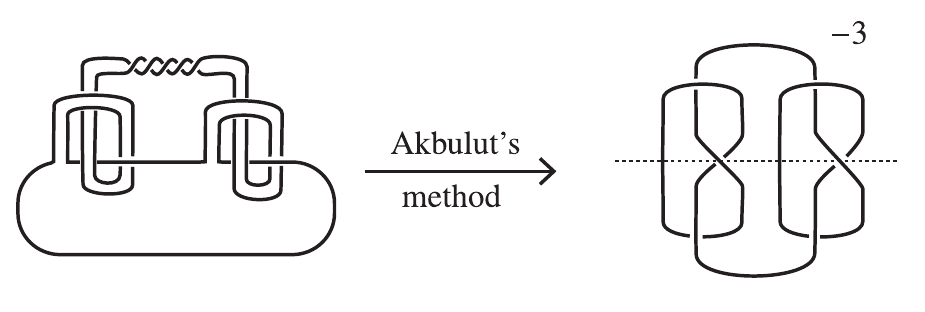}
\caption{\label{154} ribbon surfaces with boundary $10_{154}$}
\end{figure}

\begin{figure}[htbp]
\includegraphics{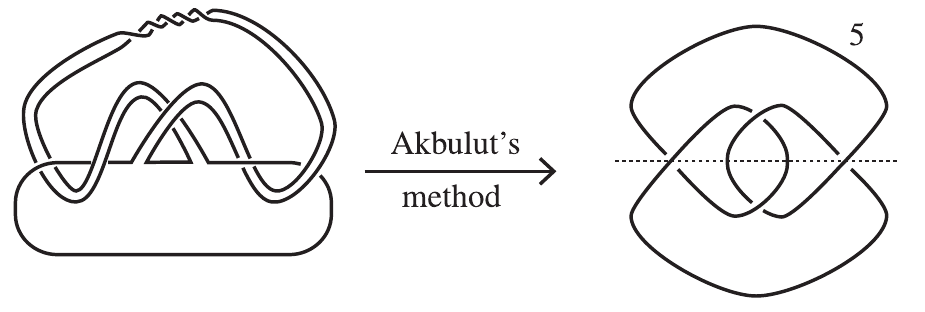}
\includegraphics{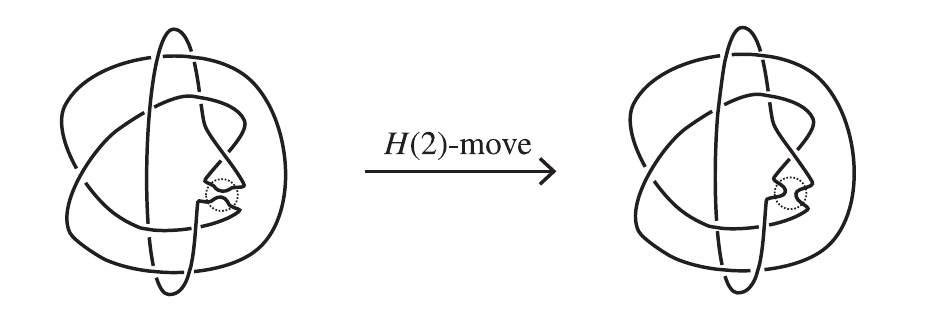}
\caption{\label{161} ribbon surfaces with boundary $10_{161}$}
\end{figure}

\end{document}